\numberwithin{equation}{section}
\newtheorem{thm}{Theorem}[section]
\newtheorem{cor}[thm]{Corollary}
\newtheorem{lem}[thm]{Lemma}
\newtheorem{prop}[thm]{Proposition}
\newtheorem{defn}[thm]{Definition}
\newtheorem{rmk}[thm]{Remark}
\newtheorem{clm}[thm]{Claim}
\newcommand{\pt}{\partial}
\begin{document}

\title[Quaternionic Monge-Amp\`ere Equations]{Dirichlet Problem of Quaternionic\\ Monge-Amp\`ere Equations }
\author{Jingyong ZHU}
\address{School of Mathematical Sciences,  University of Science and Technology of China}
\email{zjyjj14@mail.ustc.edu.cn}

%\thanks{}%
\subjclass[2000]{15A33, 35J60}%
\keywords{quaternion; Monge-Amp\`ere; equations; plurisubharmonic functions; non-commutative determinants. }

%\date{}%
%\dedicatory{}%
%\commby{}%
% ----------------------------------------------------------------
\begin{abstract}
  In this paper, the author studies quaternionic Monge-Amp$\grave{e}$re equations and obtains the existence and uniqueness of the solutions to the Dirichlet problem for such equations without any restriction on domains. Our paper aims to answer the question proposed by Semyon Alesker in \cite{alesker2003quaternionic}. It also extends relevant results in \cite{caffarelli1985dirichlet} to the quaternionic vector space.

\end{abstract}
\maketitle
% ----------------------------------------------------------------
\section{Introdution}    %章节的固定用法,第二节的表示方法: \section{2. xxxxx}%
  Quaternion and HKT-geometry is an important branch of maths. Mathematicians have discoveried some interesting facts from it. It has many applications in mathematical physics. Recently, the question whether there is a quaternionic version of Calabi-Yau Theorem has attracted some experts to do
 research on it, and they have obtained some results \cite{alesker2013solvability}\cite{alesker2013uniform}\cite{alesker2010quaternionic}. Relating to this problem, Dirichlet problem for quaternionic Monge-Amp$\grave{e}$re(MA) equations on
 arbitrary strictly pseudoconvex bounded domains is an open problem\cite{alesker2003quaternionic}.In this paper, we solve this issue.

  To begin with, we want to describe the background. The classical solvability of the Dirichlet problem for real and complex Monge-Amp$\grave{e}$re equations were proved under the
 condition of convexity and pseudo-convexity of domains in \cite{caffarelli1984dirichlet} and \cite{caffarelli1985dirichlet}, respectively. To general domains in $\mathbb{C}^n$, Bo Guan managed to obtain the same result in \cite{caffarelli1985dirichlet} assuming the existence of a subsolution to the corresponding equation\cite{guan1998dirichlet}, and generalized the result to totally real submanifolds\cite{guan2010complex} and Hermitian manifolds\cite{guan2013dirichlet}\cite{guan2013class}. In \cite{caffarelli1985dirichlet}, L. Caffarelli and his co-authors created a subsolution to the Dirichlet problem using the defining function for the strongly pseudo-convex domain. In \cite{guan2002extremal}, P. Guan constructed a subsolution to the Dirichlet problem for a degenerate complex Monge-Amp\`ere equation on a special domain with some pieces of the boundary being concave. The Monge-Amp\`ere equation also has many geometric applications, for example, Calabi conjecture\cite{calabi1954space}. In \cite{yau1978ricci}, Yau solved the Calabi conjecture. Yau's work also shows the existence of K$\ddot{a}$hler-Einstein metrics on K$\ddot{a}$hler manifolds with nonpositive first Chern class.

  In \cite{alesker2003quaternionic}, S. Alesker proved a result on existence and uniqueness of the smooth solution of Dirichlet problem
 \begin{equation}\label{eq11}
\begin{cases}
  \mathrm{det}\left(\frac{\pt^2u}{\pt q_i\pt \overline{q}_j}\right)=f(q)\ \ \ \text{in} \ \mathbf{B},  \\
  u|_{\pt{\mathbf{B}}}=\varphi, \ \ \
\end{cases}%---手动编号
\end{equation}
 where $q\in\mathbf{B}$ and $\mathbf{B}$ is the Euclidean ball in $\mathbb{H}^n$ which denotes the space of n-tuples of quaternions $(q_1,\ldots,q_n)$. He mainly followed the method in \cite{caffarelli1985dirichlet}, but he made a strong restriction on the domain. He said the reason why he failed to solve \eqref{eq11} on general strictly pseudoconvex bounded domains is the fact that the class of diffeomorphisms preserving the class of quaternionic plurisubharmonic(psh) function must be affine transformations. In a word, the priori estimates in both \cite{alesker2003quaternionic} and \cite{caffarelli1985dirichlet} depend on the positive definiteness of the matrix in the local expression of the boundary, while this positiveness can't be preserved all the time on quaternionic strictly pseudoconvex domains in general. However, from the statement in the preceding paragraph, we know this positiveness is not necessary. Since the field of quaternions is non-commutative, we need to modify some auxiliary functions in \cite{guan2010complex} and the proof of (1.47) in \cite{caffarelli1985dirichlet}. At last, we obtain the following:
          %引理,定理,定义,证明等开始前与上一段文字的距离 8pt%
\begin{thm}\label{thm11}
 Let $\Omega$ be any bounded domain with a smooth boundry. If there exists a psh function $\underline{u}\in {C^\infty}(\Omega)$ be a subsolution such that
 \begin{equation}
\begin{cases}
  \mathrm{det}\left(\frac{\pt^2\underline{u}}{\pt q_i\pt {\overline{q}_j}}\right)\geq f(q,\underline{u}),\ \ \text{in } \Omega\\
  \underline{u}|_{\pt{\Omega}}=\varphi\in {C^\infty}(\pt\Omega), \ \ \
\end{cases} %---手动编号
\end{equation}
where $f\in {C^\infty}(\overline{\Omega}\times\mathbb{R})$ and
$f>0,f_u=\frac{\pt{f}}{\pt{u}}\geq0$ for any
$q\in\overline\Omega,u\in\mathbb{R}$. Then there exists an unique
psh function $u\in {C^\infty}(\overline\Omega)$ solving
 \begin{equation}\label{eq:13}
\begin{cases}
  \mathrm{det}(u_{i\overline{j}})=\mathrm{det}\left(\frac{\pt^2u}{\pt q_i\pt {\overline{q}_j}}\right)=f(q,u),\ \ \text{in } \Omega\\
  u|_{\pt{\Omega}}=\varphi\in {C^\infty}(\pt\Omega), \ \ \
\end{cases}%---手动编号
\end{equation}.
\end{thm}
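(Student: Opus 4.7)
The plan is the method of continuity combined with a priori estimates in the style of Caffarelli–Kohn–Nirenberg–Spruck \cite{caffarelli1985dirichlet}, adapted to the quaternionic setting and incorporating B.~Guan's subsolution trick \cite{guan1998dirichlet,guan2010complex} so that no convexity assumption on $\Omega$ is required. I would consider the family
\begin{equation*}
\det\bigl(u^t_{i\overline{j}}\bigr)=(1-t)\det(\underline{u}_{i\overline{j}})+tf(q,u^t),\qquad u^t|_{\pt\Omega}=\varphi,\ t\in[0,1],
\end{equation*}
so that $u^0=\underline{u}$ and a solution at $t=1$ solves \eqref{eq:13}; note that $\underline{u}$ remains a subsolution of the $t$-equation for every $t$ by hypothesis. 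Openness follows from the implicit function theorem applied to the linearization $L=U^{i\overline{j}}\pt_{q_i}\pt_{\overline{q}_j}-tf_u$, which is uniformly elliptic (since $u^t$ is psh) with nonpositive zeroth-order term (since $f_u\ge 0$), hence invertible on $C^{2,\alpha}_0(\overline\Omega)$. Uniqueness of the smooth psh solution is an immediate consequence of the same $L$ applied to $u_1-u_2$ and the maximum principle. Everything thus reduces to uniform $C^{2,\alpha}(\overline\Omega)$ a priori bounds independent of $t$.

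These estimates I would derive in the classical cascade. The $C^0$ bound comes from comparison: $\underline{u}$ is a lower barrier and an upper bound follows from any psh majorant with the correct boundary data (e.g.\ a Perron envelope). The $C^1$ estimate splits into a boundary part, extracted from the sandwich $\underline{u}\le u^t\le\overline u$ with all three agreeing on $\pt\Omega$, and an interior part, obtained by differentiating the equation and running a Bernstein argument on $|\nabla u^t|^{2}\,e^{\phi(u^t-\underline{u})}$, just as for the ball in \cite{alesker2003quaternionic}. Interior second-order bounds then follow from a similar maximum-principle argument on $W=\lambda_{\max}(u^t_{i\overline{j}})\,e^{\phi(\underline{u}-u^t)}$; concavity of $\log\det$ on the cone of positive quaternionic Hermitian matrices together with the positive lower bound on $f$ yields the desired quadratic estimate.

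The heart of the argument is the boundary $C^2$ estimate. Tangential–tangential second derivatives on $\pt\Omega$ are directly controlled by $\varphi$. For the tangential–normal derivatives I would adapt Guan's barrier \cite{guan2010complex}: at each boundary point $q_0$, straighten $\pt\Omega$ in a suitable chart and build an auxiliary function
\begin{equation*}
v=A(\underline{u}-u^t)+B|q-q_0|^2-Nd(q)^2,
\end{equation*}
then apply $L$ to $v\pm T(u^t-\varphi)$ for appropriate tangential operators $T$, choosing $A,B,N$ so that the strict-subsolution inequality $\det(\underline{u}_{i\overline{j}})\ge f(q,\underline{u})$ provides enough slack to absorb the error terms, with no appeal to positive definiteness of a boundary-adapted matrix. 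The pure normal–normal bound is then extracted by the argument of (1.47) in \cite{caffarelli1985dirichlet}, recast so that a uniform positive lower bound on the smallest eigenvalue of the tangential block of $(u^t_{i\overline{j}})$ controls $u^t_{n\overline{n}}$ via the Monge–Amp\`ere equation itself.

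The main obstacle, as Alesker already flagged, is the non-commutativity of $\mathbb{H}$: the positivity of the complex-style boundary Hessian used in \cite{caffarelli1985dirichlet} and \cite{alesker2003quaternionic} is not preserved under the quaternionic frame changes that straighten $\pt\Omega$, so neither Guan's tangential–normal barrier nor the CKNS double-normal argument transcribes verbatim. Most of the real work will be the quaternionic recasting of these two arguments, exploiting that while the entries of $(u^t_{i\overline{j}})$ are quaternions its eigenvalues are real and its Moore determinant is well behaved under psh-preserving affine maps. Once a uniform $C^2$ bound is in place, the equation is uniformly elliptic and concave, so a version of Evans–Krylov applied in the real representation yields a uniform $C^{2,\alpha}$ estimate; Schauder bootstrapping upgrades $u^t$ to $C^\infty(\overline\Omega)$ uniformly in $t$, and closedness at $t=1$ produces the desired smooth psh solution of \eqref{eq:13}.
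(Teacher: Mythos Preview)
Your outline is correct and follows the same overall strategy as the paper: continuity method, uniqueness and openness from the linearization with $f_u\ge 0$, $C^0$ from the sandwich $\underline u\le u\le h$ (the paper takes $h$ harmonic rather than a Perron envelope), reduction of interior $C^1$ and $C^2$ to the boundary via maximum-principle barriers, and then the three-part boundary $C^2$ estimate followed by Evans--Krylov and Schauder. A few details differ but harmlessly: the paper reduces interior gradients and Hessians to the boundary by the additive test functions $\pm Du+e^{\lambda|q|^2}$ and $D^2u+e^{\lambda|q|^2}$ rather than your multiplicative Bernstein weights, and its tangential--normal barrier is $Av$ with $v=\underline u-u-td+\tfrac{N}{2}d^2$ (note the sign on $d^2$) added to $h=\pm T(u-\underline u)+\sum_{l=1}^{3}(u_{x_n^l}-\underline u_{x_n^l})^2$.

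The one place where you have correctly flagged but not resolved the difficulty is the double-normal estimate. The paper's specific quaternionic modification of Guan's complex auxiliary function is
\[
h'=\sum_{k=1}^{n}\sum_{l=1}^{3}\bigl(u_{x_k^l}-\underline u_{x_k^l}\bigr)^2+\tfrac12\sum\bigl[(\underline u-u)_k\sigma_{\overline k}+\sigma_k(\underline u-u)_{\overline k}\bigr]\xi_i\sigma_{i\overline j}\xi_{\overline j}-\sum\xi_i\underline u_{i\overline j}\xi_{\overline j}+u_{1\overline 1}(0),
\]
i.e.\ one must sum the squared first derivatives over \emph{all three imaginary directions of every quaternionic coordinate}, not just over $x_n^1,x_n^2,x_n^3$; this extra reservoir is exactly what absorbs the cross terms $F_2+F_4$ produced by non-commutativity (via Claim~\ref{clm214} and Corollary~\ref{cor217}). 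Once you know this, your sketch goes through essentially as in the paper.
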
  %引理,定理,定义内容为斜体%

  Proceeding as in \cite{caffarelli1985dirichlet}, we establish the relation between the convexity of domain in $\mathbb{H}^n$ and the subsolution to the Dirichlet problem of quaternionic MA equations:
              %引理,定理,定义,证明等开始前与上一段文字的距离 8pt%
\begin{prop}\label{prop12}
Suppose that $\Omega$ is a
quaternionic strictly pseudoconvex bounded domain. For any
$q\in\overline\Omega,u\in\mathbb{R},p\in\mathbb{R}^{4n}$, assume
$f\in C^{\infty}(\overline\Omega\times\mathbb{R}\times\mathbb{R}^{4n})$
satisfies
\begin{equation}
f>0,\ \ f_u(q,u,\nabla{u})\geq0,\ \  |f_{p_i}(q,u,p)|\leq
Cf^{1-\frac{1}{n}},
\end{equation} where $C$ is a constant. Then there
exists a subsolution $\underline{u}\in C^\infty(\overline\Omega)$
such that
 \begin{equation}\label{eq15}
\begin{cases}
  \mathrm{det}\left(\frac{\pt^2\underline{u}}{\pt q_i\pt {\overline{q}_j}}\right)\geq f(q,\underline{u},\nabla\underline{u}),\ \ \text{in } \Omega\\
  \underline{u}|_{\pt{\Omega}}=\varphi\in {C^\infty}(\pt\Omega), \ \ \
\end{cases}
\end{equation}
where $p=\nabla{u}$.
\end{prop}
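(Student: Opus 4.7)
The plan is to construct $\underline u$ explicitly via a Caffarelli--Kohn--Nirenberg--Spruck type ansatz built from a defining function, adapted to the non-commutative quaternionic setting. By strict pseudoconvexity, fix a smooth defining function $\rho \in C^{\infty}(\overline\Omega)$ with $\rho < 0$ in $\Omega$, $\rho = 0$ and $|\nabla\rho| > 0$ on $\pt\Omega$; after a standard global modification one may further assume $(\rho_{i\bar j}) \geq \lambda_0 I$ on $\overline\Omega$ for some $\lambda_0 > 0$. Extend $\varphi$ to some $\tilde\varphi \in C^{\infty}(\overline\Omega)$ and set
\[
\underline u := \tilde\varphi + A\bigl(e^{K\rho} - 1\bigr),
\]
with parameters $A, K > 0$ to be chosen. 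Since $e^{K\rho} - 1$ vanishes on $\pt\Omega$, the boundary condition $\underline u|_{\pt\Omega} = \varphi$ is automatic.

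A direct computation gives the quaternionic Hessian
\[
\underline u_{i\bar j} = \tilde\varphi_{i\bar j} + AKe^{K\rho}\rho_{i\bar j} + AK^{2}e^{K\rho}\rho_{i}\rho_{\bar j}.
\]
For $A$ sufficiently large, the first two terms combine to a uniformly positive-definite hyperhermitian matrix, while the third is a rank-one positive semi-definite contribution pointing along $\nabla\rho$. Using the Moore-determinant analog of the rank-one update formula $\det(H + vv^{*}) = \det(H)\bigl(1 + v^{*}H^{-1}v\bigr)$ from Alesker's theory, the determinant $\det(\underline u_{i\bar j})$ is bounded below by roughly $(AKe^{K\rho}\lambda_{0})^{n}\bigl(1 + K|\nabla\rho|^{2}/\Lambda\bigr)$, where $\Lambda = \sup_{\overline\Omega}\norm{\rho_{i\bar j}}$.

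Next, the hypothesis $|f_{p_{i}}| \leq Cf^{1-1/n}$ is exactly the statement that $f^{1/n}$ is Lipschitz in $p$ with constant $C/n$, giving
\[
f^{1/n}(q, u, p) \leq f^{1/n}(q, u, 0) + \tfrac{C}{n}\abs{p}.
\]
Plugging in $p = \nabla\underline u = \nabla\tilde\varphi + AKe^{K\rho}\nabla\rho$ bounds $f(q, \underline u, \nabla\underline u)^{1/n}$ above by an expression linear in $AKe^{K\rho}|\nabla\rho|$. One then chooses $K$ large enough that the rank-one factor $K|\nabla\rho|^{2}$ beats the coefficient $C/n$ on the boundary collar where $|\nabla\rho|$ is bounded below, and afterwards $A$ large enough (possibly $A = A(K) \sim e^{KD}/K$ with $D = -\min_{\overline\Omega}\rho$) to make the bulk positive-definite term dominate $f$ at interior points, where $|\nabla\rho|$ may vanish but $f(q, \underline u, \nabla\underline u)$ is uniformly bounded.

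The principal obstacle is this boundary-versus-interior balance: the simpler linear ansatz $\tilde\varphi + A\rho$ already fails whenever $C/n$ exceeds the ratio $\lambda_{0}/\sup|\nabla\rho|$, so the exponential profile is needed to generate the rank-one curvature boost in the normal direction. A secondary technical point --- where the paper's reference to modifying the auxiliary functions of \cite{guan2010complex} becomes relevant --- is justifying the rank-one update formula and the associated Minkowski-type inequality for the Moore determinant in the non-commutative hyperhermitian setting; these follow from Alesker's framework but must be handled with care since quaternionic multiplication is non-commutative.
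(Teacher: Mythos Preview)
Your construction is correct and follows the same route as the paper's Section~5: both use the CKNS ansatz $\underline u=\varphi+s(e^{kr}-1)$ built from a strictly psh defining function $r$, fix $k$ first to handle the gradient growth of $f$ near the boundary, then $s$ (your $A$) to dominate in the interior. Two places where the paper is more economical than your outline: (i) it extends $\varphi$ to a \emph{plurisubharmonic} function on $\overline\Omega$, so $\varphi_{i\bar j}\geq 0$ is simply dropped rather than absorbed into the large parameter; (ii) instead of the rank-one Moore-determinant identity you invoke, it fixes a point $z^0$, rotates quaternionic coordinates so that $r_i(z^0)=0$ for $i<n$, and reads off $\det(r_{i\bar j}+kr_{\bar j}r_i)\geq\alpha^{n-1}(\alpha+k|\nabla r|^2)$ directly from the resulting block structure --- this entirely sidesteps the non-commutative justification you flag as a secondary concern. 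The paper also packages the growth hypothesis as $f\leq C(1+|p|^n)$ via integration along $t\mapsto f(q,t\omega,t\eta)$ rather than your Lipschitz bound on $f^{1/n}$; the two formulations are interchangeable here.
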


From this property, we have                %引理,定理,定义,证明等开始前与上一段文字的距离 8pt%
\begin{cor} \label{cor13}
 Assume that $\Omega$ is a quaternionic
strictly pseudoconvex bounded domain. If $f>0$ and $f_u\geq0$, then
there exists an unique psh function $u\in
{C^\infty}(\overline\Omega)$ solving $\eqref{eq:13}$.
\end{cor}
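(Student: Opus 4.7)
\textbf{Proof proposal for Corollary \ref{cor13}.} The plan is to obtain Corollary \ref{cor13} as a direct consequence of Proposition \ref{prop12} and Theorem \ref{thm11}, by using the former to manufacture the subsolution that the latter requires. Since the corollary assumes $\Omega$ is quaternionic strictly pseudoconvex, this extra geometric hypothesis is exactly what allows us to remove the a priori assumption on the existence of a subsolution in Theorem \ref{thm11}.

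First I would verify that the data $(f,\varphi)$ in the corollary satisfies the hypotheses of Proposition \ref{prop12}. Regard $f(q,u)$ as a function $\tilde f(q,u,p):=f(q,u)$ on $\overline\Omega\times\Real\times\Real^{4n}$ that is independent of the gradient variable $p$. Then $\tilde f\in C^\infty(\overline\Omega\times\Real\times\Real^{4n})$, $\tilde f>0$, $\tilde f_u=f_u\ge 0$, and $\tilde f_{p_i}\equiv 0$, so the structural inequality $\abs{\tilde f_{p_i}}\le C\tilde f^{1-1/n}$ holds trivially with $C=0$. Together with strict pseudoconvexity of $\Omega$ and smoothness of $\varphi$, this puts us inside the scope of Proposition \ref{prop12}, which produces a psh function $\underline u\in C^\infty(\overline\Omega)$ satisfying
\begin{equation*}
\mathrm{det}\!\left(\frac{\pt^2\underline u}{\pt q_i\,\pt \overline q_j}\right)\ge f(q,\underline u)\ \ \text{in }\Omega,\qquad \underline u|_{\pt\Omega}=\varphi.
\end{equation*}

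Next I would feed this $\underline u$ back into Theorem \ref{thm11}. The hypotheses of that theorem are satisfied verbatim: $f\in C^\infty(\overline\Omega\times\Real)$, $f>0$, $f_u\ge0$, and $\underline u$ is exactly a smooth psh subsolution with boundary data $\varphi$. Theorem \ref{thm11} therefore provides a psh function $u\in C^\infty(\overline\Omega)$ solving the Dirichlet problem \eqref{eq:13}, giving the existence part of the corollary. Uniqueness is also asserted by Theorem \ref{thm11}, but it can in any case be obtained directly from the monotonicity hypothesis $f_u\ge 0$: if $u_1,u_2$ are two psh solutions with the same boundary values, then at an interior point where $u_1-u_2$ attains its maximum the comparison of the two quaternionic Monge-Amp\`ere determinants, combined with the concavity of $(\det)^{1/n}$ on the cone of hyperhermitian positive matrices, forces $u_1\le u_2$, and symmetrically $u_2\le u_1$.

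The work in this corollary is almost entirely bookkeeping, so there is no serious obstacle; the only point one has to be slightly careful about is matching the notational conventions of Proposition \ref{prop12} (which states hypotheses on $f(q,u,p)$) to those of Theorem \ref{thm11} (where $f$ depends only on $(q,u)$). Because the former reduces to the latter when $f_{p_i}\equiv 0$, the chain of implications Proposition \ref{prop12}$\Rightarrow$Theorem \ref{thm11}$\Rightarrow$Corollary \ref{cor13} closes without further estimates.
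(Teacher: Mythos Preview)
Your proposal is correct and matches the paper's approach: the paper simply writes ``From this property, we have'' Corollary \ref{cor13} immediately after stating Proposition \ref{prop12}, so the intended argument is exactly the chain Proposition \ref{prop12} $\Rightarrow$ subsolution $\Rightarrow$ Theorem \ref{thm11}. Your observation that $\tilde f_{p_i}\equiv 0$ makes the structural hypothesis of Proposition \ref{prop12} automatic is the only thing that needs to be said, and your added remark on uniqueness via the comparison principle is consistent with (though redundant given) the uniqueness already packaged into Theorem \ref{thm11}.
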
  %引理,定理,定义内容为斜体%

\begin{rmk} The restriction $``f_u\geq0"$ above is necessary
for the uniqueness of the solution, and Corollary \ref{cor13} settles the
``Question 4" in \cite{alesker2003quaternionic}.
\end{rmk}

 This article is organized as follows. In section 2, we review
some basic definitions and facts from the theory on non-commutative
determinants and plurisubharmonic functions of quaternionic
variables. In section 3, we prove priori estimates up to second
order excepting the boundary estimates for the second order normal
derivatives, which are proved more detailedly in section 4. In
section 5, we construct a subsolution to clarify Proposition \ref{prop12}.

\section{Quaternionic linear algebra}
 In the whole of this article, we consider $\mathbb{H}^n$ as right $\mathbb{H}$-vector space, i.e. vectors are multiplied by scalars on the right. The
standard thoery of vector spaces, basis, and dimension works over
$\mathbb{H}^n$, exactly like in the commutative case. However, the
theory of non-commutative determinants is quite different and
complicated. There are many useful determinants over $\mathbb{H}$,
e.g. Dieudonn$\mathrm{\acute{e}}$ determinant. Experts are still
searching for the best determinant which preserves most of the
identities and inequalities known for usual determinant of real and
complex matrices. We know the importance of real symmetric and
complex Hermitian matrices. Over $\mathbb{H}$, similarly, there is a
class of quaternionic matrices called hyperhermitian.
              %引理,定理,定义,证明等开始前与上一段文字的距离 8pt%
\begin{defn}\cite{alesker2003non} Let V be a right $\mathbb{H}$-vector space. A hyperhermitian semilinear form on V is a map $a: V\times V\rightarrow\mathbb{H}$
 satisfying the following properties:\\
 (a)\ $a$ is additive with respect to each argument;\\
 (b)\ $a(x,y\cdot q)=a(x,y)\cdot q$ for any $x,y\in V$ and $q\in\mathbb{H}$;\\
 (c)\ $a(x,y)=\overline{a(y,x)}$,\\
 where $\overline{q}$ denotes the usual quaternionic conjugation of $q\in\mathbb{H}$.
 \end{defn}

 An $n\times n$ quaternionic matrix $A=(a_{ij})$ is called $hyperhermitian$ if $A^*=A$, i.e.,$a_{ij}=\overline{a_{ji}}$ for all $i,j$. The proposition below points out the relation between hyperhermitian semilinear forms and hyperhermitian matrices.% 引理,定理,定义内容为斜体%
               %引理,定理,定义,证明等开始前与上一段文字的距离 8pt%
\begin{prop}\cite{alesker2003non} Fix a basis in a
finite dimensional right quaternionic vector space V. Then there is
a natural bijection between the space of hyperhermitian semilinear
form on V and the sapce $\mathcal{H}_n$ of $n\times n$
hyperhermitian matrices.
\end{prop}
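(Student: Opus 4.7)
My plan is to fix a basis $e_1,\dots,e_n$ of $V$ and send a form $a$ to the matrix of its values on the basis. Concretely, given a hyperhermitian semilinear form $a$, I set $a_{ij}:=a(e_i,e_j)$ and form $A=(a_{ij})$. Property (c) immediately yields $a_{ij}=\overline{a_{ji}}$, so $A\in\mathcal{H}_n$, and the assignment $a\mapsto A$ is well-defined.

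To see this assignment is injective, I need a closed-form reconstruction of $a$ from its matrix. Writing $x=\sum_i e_i x_i$ and $y=\sum_j e_j y_j$ with $x_i,y_j\in\mathbb H$, additivity in the second slot together with property (b) give $a(x,y)=\sum_j a(x,e_j)\,y_j$. The first slot is subtler because no axiom explicitly describes its behavior under scalar multiplication. The trick is to derive this from (b) and (c): for any $q\in\mathbb H$,
$$a(e_i q,\,y)=\overline{a(y,\,e_i q)}=\overline{a(y,e_i)\,q}=\overline{q}\;\overline{a(y,e_i)}=\overline{q}\,a(e_i,y).$$
Applied to the expansion of $x$, this produces the reconstruction formula
$$a(x,y)=\sum_{i,j}\overline{x_i}\,a_{ij}\,y_j,$$
from which injectivity of $a\mapsto A$ is immediate.

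For surjectivity, I will start from an arbitrary $A=(a_{ij})\in\mathcal{H}_n$, define a form $a$ by the formula above, and verify the three axioms. Additivity is clear from the bilinearity of the expression, (b) holds because every $y_j$ sits on the extreme right of each summand, and (c) reduces to $\overline{pq}=\overline{q}\,\overline{p}$ together with $a_{ij}=\overline{a_{ji}}$ followed by a relabeling of indices. Since the two constructions are visibly inverse to one another, I obtain the desired bijection, and it is natural in the sense of being canonically determined once the basis is chosen.

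The only genuine obstacle is the derivation of conjugate-linearity in the first variable from the axioms as stated, since it is not listed among them; the rest is routine quaternionic bookkeeping that must nonetheless be done carefully because $\mathbb H$ is non-commutative and the positions of scalars and conjugates matter throughout.
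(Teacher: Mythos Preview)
Your argument is correct. The paper itself does not supply a proof of this proposition; it is quoted from \cite{alesker2003non} and stated without justification, as are the surrounding facts in Section~2. So there is no ``paper's own proof'' to compare against.

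That said, your derivation is exactly the standard one, and the reconstruction formula $a(x,y)=\sum_{i,j}\overline{x_i}\,a_{ij}\,y_j$ you obtain is precisely the expression the paper uses later (see Definition~\ref{def29}, where $\xi^*A\xi=\sum\overline{\xi_i}a_{ij}\xi_j$). The one nontrivial step---extracting conjugate-linearity in the first argument from (b) and (c) rather than assuming it as an axiom---is handled correctly. Nothing to fix.
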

 % 引理,定理,定义内容为斜体%

 We are going to state some basic facts about hyperhermitian matrices as follows.
               %引理,定理,定义,证明等开始前与上一段文字的距离 8pt%
\begin{prop}\cite{alesker2003non} \label{prop23}Let A be a matrix of a given hyperhermitian form in a given basis. Assume that C is transition matrix from this basis to another one. Then we have $$A'=C^*AC,$$
where $(C^*)_{ij}=\overline{C}_{ji}$ and $A'$ denotes the matrix of the given form in the new basis.
\end{prop}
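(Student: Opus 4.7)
The plan is to compute $A'_{ij}=a(e'_i,e'_j)$ directly, substitute the change-of-basis expression $e'_j=\sum_k e_k C_{kj}$, and then pull the coordinates $C_{kj}$ in and out of the two slots of $a$ using its semilinearity. The only subtlety is that the definition only explicitly states $\mathbb{H}$-linearity on the right in the second slot (property (b)); linearity on the right in the first slot has to be derived from (b) and (c), and because of non-commutativity it produces a quaternionic conjugate.

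First I would establish the following auxiliary identity for the first argument: for any $x,y\in V$ and $q\in\mathbb{H}$,
\begin{equation*}
a(x\cdot q,\,y)=\overline{a(y,x\cdot q)}=\overline{a(y,x)\cdot q}=\overline{q}\cdot\overline{a(y,x)}=\overline{q}\cdot a(x,y),
\end{equation*}
where the first and last equalities use (c), the second uses (b), and the third uses the standard identity $\overline{pq}=\overline{q}\,\overline{p}$ for quaternionic conjugation. Combined with additivity this shows that scalars pulled out of the first slot are conjugated, while scalars pulled out of the second slot are untouched.

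Next I would write down the new basis as $e'_j=\sum_{l}e_{l}\,C_{lj}$ (so that $C$ is indeed the transition matrix in the sense that coordinate vectors transform via $C$). Then
\begin{equation*}
A'_{ij}=a(e'_i,e'_j)=a\Bigl(\sum_{k}e_k C_{ki},\ \sum_{l}e_l C_{lj}\Bigr)=\sum_{k,l}\overline{C_{ki}}\,a(e_k,e_l)\,C_{lj}=\sum_{k,l}\overline{C_{ki}}\,A_{kl}\,C_{lj},
\end{equation*}
where the first displayed reduction uses additivity in both arguments, the conjugate-linearity derived above for the first slot, and the $\mathbb{H}$-linearity (b) for the second slot. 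Recognizing $\overline{C_{ki}}=(C^*)_{ik}$ gives $A'_{ij}=\sum_{k,l}(C^*)_{ik}A_{kl}C_{lj}=(C^*AC)_{ij}$, which is the claim.

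The only conceptual obstacle is the derivation of conjugate-linearity in the first argument; once that is in hand the rest is a one-line expansion. I would also briefly note that the resulting matrix is still hyperhermitian, since $(C^*AC)^*=C^*A^*C=C^*AC$, which is a consistency check rather than a step in the proof.
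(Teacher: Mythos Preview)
Your proof is correct and is the standard direct computation: derive conjugate-linearity in the first slot from properties (b) and (c), then expand $a(e'_i,e'_j)$ in the old basis. Note, however, that the paper does not actually give a proof of this proposition; it is quoted without argument from \cite{alesker2003non}, so there is nothing in the paper to compare your approach against.
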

             %引理,定理,定义,证明等开始前与上一段文字的距离 8pt%
\begin{rmk}\cite{alesker2003non} The matrix $C^*AC$ is hyperhermitian for any hyperhermitian matrix A and any matrix C. In particular $C^*C$ is always hyperhermitian.
\end{rmk}
\begin{defn}\cite{alesker2003non} A hyperhermitian
 semilinear  form $a$ is called $positive$ $definite$ if $a(x,x)>0$
 for any non-zero vector $x$. Similarly, $a$ is called
 $non\text{-}negative$ $definite$ if $a(x,x)\geq0$ for any vector
 $x$.
 \end{defn}

  Let us fix on our quaternionic right space V a positive
definite hyperhermitian form $(\cdot,\cdot)$. The space with such a
form is called $hyperhermitian$ $space$. For any quaternionic linear
operator $\phi: V\rightarrow V$ in hyperhermitian space one can
define the adjoint operator $\phi^*: V\rightarrow V$ in the usual
way, i.e. $(\phi x,y)=(x,\phi y)$ for any $x,y\in V$. Moreover, if
one fixes an orthonormal basis in the space V, then the operator
$\phi$ is selfadjoint if and only if its matrix under this basis is
hyperhermitian.
\begin{prop}\cite{alesker2003non}\label{prop26} For any
selfadjoint operator in a hyperhermitian space there exists an
orthonormal basis such that its matrix in this basis is diagonal and
real.
\end{prop}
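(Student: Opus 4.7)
The plan is to mimic the classical complex Hermitian spectral theorem, replacing complex conjugation by quaternionic conjugation and being careful that scalars now act on the right. The argument splits into two stages: (i) produce one eigenvector of $\phi$ with a real eigenvalue, and (ii) induct on $\dim_{\mathbb H} V$ by passing to the orthogonal complement.

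For stage (i), I would consider the Rayleigh-type function $f(x)=(\phi x,x)$ on the compact unit sphere $S=\{x\in V:(x,x)=1\}$. The selfadjointness together with property (c) of Definition 2.1 shows $f(x)=(\phi x,x)=(x,\phi x)=\overline{(\phi x,x)}$, so $f$ is real-valued; by compactness it attains a maximum at some $v\in S$. A first-variation computation along curves $v+tw$ with $w\perp v$, using properties (a), (b), (c) of the form, yields $\mathrm{Re}(\phi v,w)=0$ for every such $w$; since I may replace $w$ by $wq$ for arbitrary $q\in\mathbb H$ without leaving $v^{\perp}$, this forces $(\phi v,w)=0$ for all $w\perp v$. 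Hence $\phi v\in v\cdot\mathbb H$, i.e.\ $\phi v=v\lambda$ for some $\lambda\in\mathbb H$. Selfadjointness then gives $\lambda=(v,v)\lambda=(v,v\lambda)=(v,\phi v)=(\phi v,v)=\overline\lambda(v,v)=\overline\lambda$, so $\lambda\in\mathbb R$.

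For stage (ii), set $v_1:=v$, $\lambda_1:=\lambda$ and $W:=v_1^{\perp}$. Then $W$ is $\phi$-invariant, because for $w\in W$ one has $(\phi w,v_1)=(w,\phi v_1)=(w,v_1)\lambda_1=0$ (using property (b) and that $\lambda_1$ is real). The restriction of the form to $W$ is again positive definite hyperhermitian and $\phi|_W$ is selfadjoint, so induction on dimension produces an orthonormal basis $v_2,\dots,v_n$ of $W$ with $\phi v_i=v_i\lambda_i$, $\lambda_i\in\mathbb R$. Concatenating $v_1$ with $v_2,\dots,v_n$ gives the desired basis; expanding $\phi v_j=\sum_i v_i a_{ij}$ yields $a_{ij}=\delta_{ij}\lambda_j$, so the matrix of $\phi$ is diagonal and real.

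The only delicate step is the first-variation argument in stage (i): one must carefully exploit the right $\mathbb H$-module structure to conclude that the real inner product vanishing on all $w\perp v$ promotes to the full quaternionic inner product vanishing. Everything else is essentially the standard Hermitian spectral proof transcribed into the language of Definition 2.1 and the adjoint introduced before the statement.
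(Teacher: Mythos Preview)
The paper does not prove Proposition~\ref{prop26}; it is simply quoted from \cite{alesker2003non} as background, so there is no ``paper's own proof'' to compare against. Your argument therefore supplies what the paper omits, and it is correct.

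A couple of minor comments on execution. In stage~(i), your curve $t\mapsto v+tw$ with $w\perp v$ leaves the sphere only at second order, so the first variation is legitimate; the promotion from $\mathrm{Re}(\phi v,w)=0$ to $(\phi v,w)=0$ via $w\mapsto wq$ is exactly the right use of the right $\mathbb H$-module structure (note $(\phi v,wq)=(\phi v,w)q$, and $\mathrm{Re}(aq)=0$ for all $q\in\mathbb H$ forces $a=0$). In stage~(ii), the invariance check $(\phi w,v_1)=(w,\phi v_1)=(w,v_1\lambda_1)=(w,v_1)\lambda_1=0$ uses only property~(b) of Definition~2.1; realness of $\lambda_1$ is not actually needed there. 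Otherwise the induction is the standard spectral argument transcribed correctly to the hyperhermitian setting.
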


 We are going to introduce the Moore determinant of hyperhermitian matices. For the definition of the Moore determinant one can refer to \cite{moore1922determinant}\cite{aleksandrov1938gemischte}\cite{alesker2003non}. It is useful to give an explicit formula for the Moore determinant. Let $A=(a_{ij})^n_{i,j=1}$ be a hyperhermitian $(n\times n)$-matrix. Suppose that $\sigma$ be a permutation of {1,...,n}. Write $\sigma$ as a product of disjoint cycles such that each cycle starts with the smallest number. Since disjoint cycles commute we can write $$\sigma=(k_{11}\ldots k_{1j_1})(k_{21}\ldots k_{2j_2})\ldots(k_{m1}\ldots k_{mj_m}),$$
where for each i we have $k_{i1}<k_{ij}$ for all $j>1$, and $k_{11}>k_{21}>\ldots>k_{m1}$. This expression is unique. Let $sgn(\sigma)$ be the parity of $\sigma$. For the next formula one can refer to \cite{moore1922determinant}\cite{aslaksen1996quaternionic}.
              %引理,定理,定义,证明等开始前与上一段文字的距离 8pt%
\begin{thm}\cite{moore1922determinant}\cite{aslaksen1996quaternionic} The Moore determinant of A is
$$\mathrm{det}A=\sum_{\sigma}sgn(\sigma)a_{k_{11}k_{12}}\ldots a_{k_{1j_1}k_{11}}a_{k_{21}k_{22}}\ldots a_{k_{mj_m}k_{m1}},$$
where the sum runs over all permutations.
\end{thm}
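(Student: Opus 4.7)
The plan is to show that the explicit sum on the right-hand side defines a function of hyperhermitian matrices which (i) is unambiguous despite non-commutativity, (ii) agrees with the classical determinant on real symmetric matrices, and (iii) is invariant under congruence by unitary quaternionic matrices. By Proposition \ref{prop26} any hyperhermitian matrix is orthogonally equivalent to a real diagonal one, so (ii) and (iii) force the formula to equal whatever function one has previously called the Moore determinant.

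For unambiguity, I would observe that the decomposition of $\sigma$ is uniquely normalized by the two conditions imposed (each cycle begins with its smallest element; cycles are arranged in decreasing order of first element), which fixes the order of the quaternionic factors $a_{k_{ij}k_{i,j+1}}$. Real-valuedness then follows by pairing $\sigma$ with $\sigma^{-1}$: the cycles of $\sigma^{-1}$ are the reversals of those of $\sigma$, so hyperhermiticity $a_{ij}=\overline{a_{ji}}$ makes the product for $\sigma^{-1}$ the quaternionic conjugate of the product for $\sigma$; since $\mathrm{sgn}(\sigma)=\mathrm{sgn}(\sigma^{-1})$ the two contributions sum to a real number. Fixed points of this involution are products of $1$- and $2$-cycles, for which the associated factors $a_{ii}$ and $a_{ij}a_{ji}=|a_{ij}|^2$ are already real. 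Agreement with the classical determinant on real diagonal matrices is immediate: only the identity permutation survives, yielding $a_{11}\cdots a_{nn}$, which is the product of the eigenvalues supplied by Proposition \ref{prop26}.

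The main obstacle is property (iii). Under $A\mapsto U^*AU$ with $U$ unitary, direct substitution into the sum produces multi-fold sums over tuples of indices in which the quaternionic factors cannot be freely commuted, so one cannot simply mimic the classical determinant identity $\det(U^*AU)=\det(A)$. I would reduce to two families that generate the unitary group of $\mathbb{H}^n$: diagonal unit-quaternion matrices, and elementary rotations supported in a single coordinate $2$-plane. Invariance under the diagonal family follows from a short computation that conjugates each cycle product by a unit quaternion and then uses the $\sigma,\sigma^{-1}$ pairing from the previous paragraph to kill the non-real rotation. The delicate case is the rotation in a coordinate $2$-plane, where the cyclic structure of $\sigma$ must be tracked carefully as two indices are mixed and the cycle normalization in the statement becomes essential in reassembling the transformed sum; this computation is the substance of \cite{aslaksen1996quaternionic} and is where most of the technical work concentrates.
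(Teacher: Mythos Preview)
The paper does not prove this theorem. It is stated with attribution to \cite{moore1922determinant} and \cite{aslaksen1996quaternionic}, after explicitly referring the reader elsewhere for the definition of the Moore determinant, and is used thereafter as a black box. There is thus no proof in the paper to compare your proposal against.

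On the mathematics of your proposal itself, there is a genuine slip in the real-valuedness argument. Your claim that the product associated to $\sigma^{-1}$ is the quaternionic conjugate of the product for $\sigma$ is correct for a single cycle but fails once $\sigma$ has two or more cycles of length at least $3$. If the normalized cycle products of $\sigma$ are $P_1,\ldots,P_m$ (in the prescribed order), then the product for $\sigma^{-1}$ is $\overline{P_1}\cdots\overline{P_m}$, whereas $\overline{P_1\cdots P_m}=\overline{P_m}\cdots\overline{P_1}$; these differ over $\mathbb{H}$. The correct grouping is not the involution $\sigma\leftrightarrow\sigma^{-1}$ but the full $2^r$-orbit obtained by independently reversing each of the $r$ cycles of length $\geq 3$; summing over that orbit produces $(2\,\mathrm{Re}\,P_{i_1})\cdots(2\,\mathrm{Re}\,P_{i_r})$ times the already-real contributions of the $1$- and $2$-cycles.

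More substantively, your step (iii) is the entire content of the theorem, and you do not actually carry it out: you reduce to diagonal unit-quaternion matrices and planar rotations and then say the latter ``is the substance of \cite{aslaksen1996quaternionic}.'' That is a citation, not a proof. If you intend to give a self-contained argument, the planar-rotation case must be written out, and it is precisely there that the cycle normalization in the statement is doing real work.
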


  From now on, we denote the Moore determinant of A by $\rm{det}A$. For hyperhermitian matrices, the Moore determinant is the best one, because it has almost all the algebraic and analytic properties of the usual determinant of real symmetric and complex hyperhermitian matrices. Let us state some of them.
              %引理,定理,定义,证明等开始前与上一段文字的距离 8pt%
\begin{thm}\label{thm28}\cite{alesker2003quaternionic} (1) The Moore determinant of any complex hermitian matrix considered as quaternionic hyperhermitian matrix is equal to its usual determinant.
 (2) For any hyperhermitian matrix A and any matrix C we have $$\mathrm{det}(C^*AC)=\mathrm{det}A\cdot\mathrm{det}(C^*C).$$
\end{thm}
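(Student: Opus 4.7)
The plan is to treat the two parts separately, using the explicit formula for the Moore determinant recalled just above the theorem.

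For (1), the key observation is that commutativity of $\mathbb{C}$ collapses the Moore expansion into the ordinary Leibniz expansion. Given any permutation $\sigma$ written in the normalized disjoint-cycle form of the statement, the associated Moore term is
\begin{equation*}
\mathrm{sgn}(\sigma)\,a_{k_{11}k_{12}}a_{k_{12}k_{13}}\cdots a_{k_{1j_1}k_{11}}\cdots a_{k_{mj_m}k_{m1}}.
\end{equation*}
In the ordinary complex Leibniz expansion $\sum_\sigma\mathrm{sgn}(\sigma)\prod_i a_{i\sigma(i)}$, the product $\prod_i a_{i\sigma(i)}$ regroups along the disjoint cycles of $\sigma$ into exactly the same factors, possibly in a different order. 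Since $\mathbb{C}$ is commutative the reordering is free, so the two sums agree term by term, and the Moore determinant coincides with the usual complex determinant.

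For (2), the cleanest route I can see goes through the complex realization $\chi:M_n(\mathbb{H})\to M_{2n}(\mathbb{C})$ induced by writing each quaternion as $q=z_1+z_2 j$ with $z_1,z_2\in\mathbb{C}$ and replacing it with the block $\bigl(\begin{smallmatrix}z_1 & -\overline{z_2}\\ z_2 & \overline{z_1}\end{smallmatrix}\bigr)$. This map is an $\mathbb{R}$-algebra homomorphism satisfying $\chi(A^*)=\chi(A)^*$, so $\chi(C^*AC)=\chi(C)^*\chi(A)\chi(C)$, and $\chi(A)$ is complex hermitian whenever $A$ is hyperhermitian. The core auxiliary identity to establish first is
\begin{equation*}
(\mathrm{det}\,A)^2=\mathrm{det}_{\mathbb{C}}(\chi(A))\qquad\text{for every hyperhermitian }A,
\end{equation*}
which I would prove by invoking Proposition \ref{prop26} to reduce to the case of real diagonal $A$ (where the identity is immediate) after checking that both sides are invariant under hyperhermitian conjugation by unitaries. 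Granted this identity, multiplicativity of $\mathrm{det}_{\mathbb{C}}$ gives
\begin{equation*}
(\mathrm{det}(C^*AC))^2=\mathrm{det}_{\mathbb{C}}\chi(A)\cdot|\mathrm{det}_{\mathbb{C}}\chi(C)|^2=(\mathrm{det}\,A)^2\,(\mathrm{det}(C^*C))^2,
\end{equation*}
where the second equality applies the same auxiliary identity to the hyperhermitian matrix $C^*C$.

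The main technical obstacle is extracting square roots with the correct sign. Part (1), applied after diagonalizing $C^*C$ via Proposition \ref{prop26}, shows $\mathrm{det}(C^*C)\geq 0$, so that factor's sign is pinned down. For the remaining sign in $\mathrm{det}(C^*AC)=\pm\,\mathrm{det}(A)\,\mathrm{det}(C^*C)$, I would argue by continuity: both sides are real polynomials in the entries of $A$ (constrained to be hyperhermitian) and $C$, they agree trivially when $C=I$, and the sign can change only across the proper algebraic locus where one of the determinants vanishes. Hence the value $+1$ at $C=I$ propagates throughout the connected complement, and the general case is recovered by the limit argument $C_\varepsilon=C+\varepsilon I\to C$. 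Equivalently, for invertible $C$ one may match signs directly by Sylvester's law of inertia applied to $\chi(A)$ and $\chi(C^*AC)$ and then pass to the boundary.
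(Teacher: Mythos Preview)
The paper does not actually prove this theorem: it is quoted from \cite{alesker2003quaternionic} as background and used without argument, so there is nothing in the paper to compare your proposal against.

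That said, your sketch is essentially the standard route and is correct in outline. Part (1) is exactly right. For part (2), the sign extraction can be made cleaner than your continuity/propagation argument: $P(A,C)=\det(C^*AC)$ and $Q(A,C)=\det(A)\,\det(C^*C)$ are real polynomial functions on the connected real vector space of pairs $(A,C)$ with $A$ hyperhermitian, and you have shown $P^2=Q^2$, i.e.\ $(P-Q)(P+Q)=0$. Since the polynomial ring on $\mathbb{R}^N$ is an integral domain, one factor vanishes identically; evaluating at $C=I$ with $\det A\neq 0$ rules out $P+Q\equiv 0$, hence $P\equiv Q$. This avoids the somewhat informal ``the sign propagates through the connected complement'' step and the limit $C_\varepsilon\to C$.
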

           %引理,定理,定义,证明等开始前与上一段文字的距离 8pt%
\begin{defn}\label{def29} \cite{alesker2003non} Let
$A=(a_{ij})_{i,j=1}^n$ be a quaternionic hyperhermitian matrix. A is
called $non\text{-}negative$ $definite$ if for every n-column of
quaternions $\xi=(\xi_i)^n_{i=1}$ one has
$$\xi^*A\xi=\sum\overline{\xi_i}a_{ij}\xi_j\geq0.$$
where $\sum$ denotes a summation over repeated indices. Similarly, A
is called $positive$ $definite$ if the above expression
is strictly positive unless $\xi=0$.
\end{defn}

  From Proposition \ref{prop23}, Proposition \ref{prop26} and Theorem \ref{thm28}, one can easily check:
               %引理,定理,定义,证明等开始前与上一段文字的距离 8pt%
\begin{prop}\cite{alesker2003non} Let A be a
non-negative(resp. positive) definite hyperhermitian matrix. Then
$\mathrm{det}A\geq0$(resp. $\mathrm{det}A>0$).
\end{prop}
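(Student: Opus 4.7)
The plan is to spectrally decompose a hyperhermitian matrix and read off the sign of $\det A$ from its real eigenvalues, exploiting the two properties in Theorem \ref{thm28} together with the diagonalization statement Proposition \ref{prop26}.

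First, I would put the quaternionic right space $V=\mathbb{H}^n$ with its standard positive definite hyperhermitian form, so that the standard basis is orthonormal. Given a hyperhermitian matrix $A$, it is the matrix (in this basis) of a selfadjoint operator $\phi\colon V\to V$, as remarked just before Proposition \ref{prop26}. By Proposition \ref{prop26}, there is a second orthonormal basis in which $\phi$ has diagonal real matrix $D=\mathrm{diag}(\lambda_1,\dots,\lambda_n)$. Let $C$ be the transition matrix between the two orthonormal bases; Proposition \ref{prop23} gives $D=C^*AC$. Since the hyperhermitian form has matrix $I$ in both orthonormal bases, Proposition \ref{prop23} applied to the form itself yields $C^*C=I$.

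Next, I would compute $\det A$ via Theorem \ref{thm28}. Part (2) gives
\[
\det D \;=\; \det(C^*AC) \;=\; \det A\cdot \det(C^*C) \;=\; \det A\cdot \det I.
\]
Since $I$ is a complex hermitian matrix whose usual determinant is $1$, Theorem \ref{thm28}(1) yields $\det I=1$, hence $\det A=\det D$. Again by Theorem \ref{thm28}(1), since $D$ is a real diagonal (hence complex hermitian) matrix, $\det D=\lambda_1\lambda_2\cdots\lambda_n$.

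Finally, I would translate the (non-negative / positive) definiteness hypothesis into sign information on the $\lambda_i$. Using Definition \ref{def29}, for any $\eta\in\mathbb{H}^n$ setting $\xi=C\eta$ gives
\[
\xi^*A\xi \;=\; \eta^*C^*AC\eta \;=\; \eta^*D\eta \;=\; \sum_{i=1}^n \lambda_i |\eta_i|^2,
\]
and evaluating at $\eta=e_i$ forces $\lambda_i\ge 0$ (resp.\ $\lambda_i>0$). Therefore $\det A=\prod \lambda_i\ge 0$ (resp.\ $>0$), which is the claim. No genuine obstacle is anticipated; the only mild subtlety is being careful that the transition matrix between two \emph{orthonormal} bases satisfies $C^*C=I$, which is exactly where Proposition \ref{prop23} applied to the form $(\cdot,\cdot)$ itself is used.
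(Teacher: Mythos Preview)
Your proof is correct and follows exactly the route the paper indicates: the paper simply states that the proposition can be ``easily checked'' from Proposition~\ref{prop23}, Proposition~\ref{prop26} and Theorem~\ref{thm28}, and you have filled in precisely those details (diagonalize via an orthonormal change of basis, use $C^*C=I$, and read off $\det A=\prod\lambda_i$). Nothing needs to be changed.
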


  The following theorem is a quaternionic generalization of the standard Sylvester criterion.
               %引理,定理,定义,证明等开始前与上一段文字的距离 8pt%
\begin{thm}\cite{alesker2003non} A hyperhermitian
$(n\times n)$-matrix A is positive definite if and only if the Moore
determinants of all the left upper minors of A are positive.
\end{thm}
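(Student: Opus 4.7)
The plan is to prove the quaternionic Sylvester criterion in two directions, with the reverse direction handled by induction on $n$. For the forward direction, I would fix $k \le n$ and observe that for any nonzero $\xi \in \mathbb{H}^{k}$, padding with zeros yields a nonzero vector in $\mathbb{H}^{n}$ on which the form takes the positive value $\xi^{*}A_{k}\xi$; hence the $k\times k$ leading principal submatrix $A_{k}$ is positive definite, and the preceding proposition gives $\det A_{k} > 0$ for every $k$.

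For the reverse direction I would proceed by induction on $n$. The base case $n=1$ is immediate since $A = (a_{11})$ with $a_{11} > 0$ is automatically positive definite. For the inductive step, write
\[
A = \begin{pmatrix} A_{n-1} & b \\ b^{*} & a_{nn} \end{pmatrix},
\]
where $a_{nn}$ is real because $A$ is hyperhermitian, and the inductive hypothesis applied to the leading minors of $A_{n-1}$ guarantees that $A_{n-1}$ is positive definite. The heart of the argument is to reduce $A$ to a real diagonal matrix via a congruence transformation $C^{*}AC$. Using Proposition \ref{prop26} to orthonormally diagonalize $A_{n-1}$ and then rescaling by a real positive diagonal matrix, I would obtain an invertible $P$ with $P^{*}A_{n-1}P = I$. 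Setting $c = P^{*}b$ and combining $C_{1} = \mathrm{diag}(P,1)$ with the Schur-complement-style block $C_{2} = \begin{pmatrix} I & -c \\ 0 & 1 \end{pmatrix}$, a direct block calculation should yield
\[
(C_{1}C_{2})^{*}A(C_{1}C_{2}) = \mathrm{diag}(1,\dots,1,\, a_{nn} - c^{*}c).
\]

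At this point I would invoke Theorem \ref{thm28}: part (1) identifies the Moore determinant of this real diagonal matrix with $a_{nn} - c^{*}c$, while part (2), together with invertibility of $C_{1}C_{2}$ (so that $\det((C_{1}C_{2})^{*}(C_{1}C_{2})) > 0$), shows that $a_{nn} - c^{*}c$ has the same sign as $\det A$, which is positive by hypothesis. Hence the diagonal matrix on the right is positive definite as a hyperhermitian form, and applying Proposition \ref{prop23} in reverse transfers positive definiteness back to $A$.

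The step I expect to need the most care is the block reduction itself, since non-commutativity of $\mathbb{H}$ makes Gauss-elimination-type manipulations less automatic than in the complex case. The cleansing observations are that $a_{nn}$ is real (hyperhermiticity of $A$) and that $c^{*}c = \sum |c_{i}|^{2}$ is a nonnegative real number, so the final diagonal matrix has real entries and Theorem \ref{thm28}(1) lets me evaluate its Moore determinant as an ordinary real determinant. A secondary check is the normalization $P^{*}A_{n-1}P = I$: I need the diagonal output of Proposition \ref{prop26} to have strictly positive entries, which is where I use the positive definiteness of $A_{n-1}$ inherited from the induction hypothesis.
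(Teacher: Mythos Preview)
The paper does not prove this theorem: it appears in the background Section~2 as a result quoted verbatim from \cite{alesker2003non}, with no argument supplied. There is therefore no in-paper proof to compare your proposal against.

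That said, your argument is sound. The forward direction is immediate from the preceding proposition, as you say. For the reverse direction, the block computation
\[
(C_1C_2)^*A(C_1C_2)=\mathrm{diag}(1,\dots,1,\,a_{nn}-c^*c)
\]
goes through exactly as written (the non-commutativity causes no trouble because the relevant products $c^*c$ and $a_{nn}$ are real), and the combination of Theorem~\ref{thm28}(1),(2) with invertibility of $C_1C_2$ correctly pins down the sign of $a_{nn}-c^*c$. The only place to be slightly more explicit is the claim $\det\bigl((C_1C_2)^*(C_1C_2)\bigr)>0$: you should note that $(C_1C_2)^*(C_1C_2)$ is positive definite in the sense of Definition~\ref{def29} (since $|(C_1C_2)\xi|^2>0$ for $\xi\neq0$), and then invoke the proposition on positive Moore determinants. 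With that small clarification the proof is complete and essentially matches the classical real/complex argument, which is also how the result is proved in \cite{alesker2003non}.
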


 Let us define now the mixed discriminant of hyperhermitian matrices in analogy with the case of real symmetric matrices studied by A. D. Aleksandrov\cite{aleksandrov1938gemischte}.
                %引理,定理,定义,证明等开始前与上一段文字的距离 8pt%
\begin{defn}\cite{alesker2003non} Let $A_1,\ldots,A_n$ be hyperhermitian $(n\times n)$-matrices. Consider the homogeneous polynomial in real variables $\lambda_1,\ldots,\lambda_n$ of degree n equal to $\mathrm{det}(\lambda_1A_1+\cdots+\lambda_nA_n)$. The coefficient of the monomial $\lambda_1\cdot\cdots\cdot\lambda_n$ divided by $n!$ is called the $mixed$ $discriminant$ of the matrices $A_1,\ldots,A_n$, and it is denoted by $\mathrm{det}(A_1,\ldots,A_n)$.
\end{defn}
                %引理,定理,定义,证明等开始前与上一段文字的距离 8pt%
\begin{prop}\cite{alesker2003non} The mixed
discriminant is symmetric with respect to all variables, and linear
with respect to each of them, i.e.
$$\mathrm{det}(\lambda A'_1+\mu A''_1,A_2,\ldots,A_n)=\lambda\cdot\mathrm{det}(A'_1,A_2,\ldots,A_n)+\mu\cdot\mathrm{det}(A''_1,A_2,\ldots,A_n)$$
for any real $\lambda,\mu$. In particular,
$\mathrm{det}(A,\ldots,A)=\mathrm{det}A$.
\end{prop}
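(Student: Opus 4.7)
The plan is to unwind the definition of the mixed discriminant purely formally, noting that $P(\lambda_1,\ldots,\lambda_n):=\mathrm{det}(\lambda_1 A_1+\cdots+\lambda_n A_n)$ is a genuine polynomial in the real variables $\lambda_i$: each entry of the sum is $\mathbb{R}$-linear in the $\lambda_i$, and the explicit Moore-determinant formula from the theorem above expresses $\mathrm{det}$ as a polynomial in the matrix entries. So the coefficient of $\lambda_1\cdots\lambda_n$ is well-defined, and $n!\,\mathrm{det}(A_1,\ldots,A_n)$ equals this coefficient by definition.

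For symmetry, I would observe that for any permutation $\sigma$ of $\{1,\ldots,n\}$ one has
$$P(\lambda_{\sigma(1)},\ldots,\lambda_{\sigma(n)})=\mathrm{det}\!\Bigl(\sum_{i}\lambda_{\sigma(i)}A_i\Bigr)=\mathrm{det}\!\Bigl(\sum_{j}\lambda_j A_{\sigma^{-1}(j)}\Bigr),$$
so the polynomial associated to the tuple $(A_{\sigma^{-1}(1)},\ldots,A_{\sigma^{-1}(n)})$ is obtained from $P$ by permuting the $\lambda$-variables. Since the monomial $\lambda_1\cdots\lambda_n$ is itself symmetric, its coefficient is unchanged, which gives $\mathrm{det}(A_1,\ldots,A_n)=\mathrm{det}(A_{\sigma^{-1}(1)},\ldots,A_{\sigma^{-1}(n)})$.

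For linearity in the first argument, I would introduce two auxiliary variables $\mu,\nu$ in place of $\lambda_1$ and consider
$$R(\mu,\nu,\lambda_2,\ldots,\lambda_n)=\mathrm{det}(\mu A'_1+\nu A''_1+\lambda_2 A_2+\cdots+\lambda_n A_n),$$
whose coefficient of $\mu\lambda_2\cdots\lambda_n$ (respectively $\nu\lambda_2\cdots\lambda_n$) is $n!\,\mathrm{det}(A'_1,A_2,\ldots,A_n)$ (respectively $n!\,\mathrm{det}(A''_1,A_2,\ldots,A_n)$) by definition and by the symmetry already established. Specializing $\mu=\lambda \lambda_1$, $\nu=\mu\lambda_1$ recovers the polynomial for the tuple $(\lambda A'_1+\mu A''_1, A_2,\ldots,A_n)$, and extracting the $\lambda_1\cdots\lambda_n$-coefficient on both sides gives the claimed identity. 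The last assertion $\mathrm{det}(A,\ldots,A)=\mathrm{det}A$ then falls out from $\mathrm{det}\bigl((\lambda_1+\cdots+\lambda_n)A\bigr)=(\lambda_1+\cdots+\lambda_n)^n\mathrm{det}A$, where the factor $t^n$ under $A\mapsto tA$ is immediate from the explicit Moore formula; applying the multinomial theorem, the coefficient of $\lambda_1\cdots\lambda_n$ equals $n!\,\mathrm{det}A$, and dividing by $n!$ finishes the argument.

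There is no real obstacle here: the entire statement is a formal consequence of the polynomial definition together with homogeneity $\mathrm{det}(tA)=t^n\mathrm{det}A$. The only point worth double-checking is that the Moore determinant is genuinely polynomial (hence real-valued) in the real parameters $\lambda_i$, which is guaranteed because $\sum\lambda_i A_i$ remains hyperhermitian for real $\lambda_i$ and the Moore formula is polynomial in the matrix entries.
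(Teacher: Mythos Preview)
Your argument is correct and is the standard polarization reasoning: symmetry follows because the monomial $\lambda_1\cdots\lambda_n$ is symmetric, linearity follows from splitting the first slot into two auxiliary variables, and $\mathrm{det}(A,\ldots,A)=\mathrm{det}A$ follows from homogeneity $\mathrm{det}(tA)=t^n\mathrm{det}A$ together with the multinomial expansion. One cosmetic point: in your linearity step you use $\mu$ both for the auxiliary polynomial variable and for the scalar coefficient from the statement, which should be cleaned up.

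As for comparison with the paper: the paper does \emph{not} prove this proposition; it is quoted from \cite{alesker2003non} and stated without proof. So there is no ``paper's own proof'' to compare against, and your self-contained derivation is a perfectly acceptable way to fill in the omitted justification.
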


 By Proposition \ref{prop23}, Proposition \ref{prop26} and Theorem \ref{thm28}, we get the following algebraic identity.
                %引理,定理,定义,证明等开始前与上一段文字的距离 8pt%
\begin{clm}\cite{alesker2003quaternionic}\label{clm214} For any vector $a=(a_1,\cdots,a_n)$ we have
$${\rm{det}}((a_{\overline{j}}u_{i\overline{n}})+(a_{\overline{j}}u_{i\overline{n}})^*,\pt^2u[n-1])
=2({\rm{Re}}\ a_{\overline{n}}){\rm{det}}(u_{i\overline{j}}),$$
where $(u_{i\overline{j}})$ is the matrix in Theorem \ref{thm11}.
\end{clm}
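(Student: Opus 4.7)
The plan is to interpret the left-hand side as the first variation of the Moore determinant and evaluate it after diagonalizing $H:=(u_{i\overline j})$. The first observation is that the matrix $B:=(a_{\overline j}u_{i\overline n})$ factors as a rank-one outer product $B=vw$, where $v$ is the column with entries $v_i=u_{i\overline n}$ (so that $v$ is the $n$-th column of $H$, i.e.\ $v=He_n$) and $w$ is the row $(a_{\overline 1},\ldots,a_{\overline n})$. Consequently $B^*=w^*v^*$, $B+B^*=vw+w^*v^*$, and whenever $H$ is invertible one has the scalar identity $wH^{-1}v=we_n=a_{\overline n}$. Applying multilinearity to the definition of the mixed discriminant reduces the claim to evaluating $\frac{d}{dt}\big|_{t=0}\det(H+t(vw+w^*v^*))$, which, up to the normalization absorbed into the paper's ``$\det(\,\cdot\,,\partial^2u[n-1])$'' convention, must equal $2\,\mathrm{Re}(a_{\overline n})\det H$.

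Next, I would diagonalize $H$. By Proposition~\ref{prop26}, pick a quaternionic unitary $U$ (so $U^*U=UU^*=I$) with $U^*HU=D=\mathrm{diag}(d_1,\ldots,d_n)$, $d_i\in\mathbb R$. Theorem~\ref{thm28}(2) combined with $\det(UU^*)=1$ yields $\det(H+tK)=\det(D+tU^*KU)$ for every hyperhermitian $K$. Setting $\tilde v:=U^*v$ and $\tilde w:=wU$, one checks that $U^*(vw+w^*v^*)U=\tilde v\tilde w+\tilde w^*\tilde v^*$, so the problem reduces to computing $\frac{d}{dt}\big|_{t=0}\det(D+t(\tilde v\tilde w+\tilde w^*\tilde v^*))$.

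The explicit cycle formula for the Moore determinant from Section~2 does the rest. Every non-identity permutation contains a cycle of length $\geq 2$ and therefore picks up at least two off-diagonal factors of order $t$; hence to first order, only the identity permutation contributes. Since the diagonal entries of $\tilde v\tilde w+\tilde w^*\tilde v^*$ are $\tilde v_i\tilde w_i+\overline{\tilde v_i\tilde w_i}=2\,\mathrm{Re}(\tilde v_i\tilde w_i)$, the expansion reads $\prod_i d_i+2t\sum_i\mathrm{Re}(\tilde v_i\tilde w_i)\prod_{j\neq i}d_j+O(t^2)$. Using the quaternionic identity $\mathrm{Re}(pq)=\mathrm{Re}(qp)$ and the fact that each $d_i\in\mathbb R$ commutes with any quaternion, the linear coefficient rearranges to
$$2\,\mathrm{Re}\Bigl(\sum_i\tilde w_i\tfrac{1}{d_i}\tilde v_i\Bigr)\prod_j d_j=2\,\mathrm{Re}(\tilde wD^{-1}\tilde v)\det H=2\,\mathrm{Re}(wH^{-1}v)\det H=2\,\mathrm{Re}(a_{\overline n})\det H,$$
which is the claimed identity.

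The main obstacle is careful bookkeeping for the non-commutativity: one must check that $B=vw$ transforms cleanly to $\tilde v\tilde w$ under unitary conjugation, that $\mathrm{Re}(pq)=\mathrm{Re}(qp)$ is used only on two-factor quaternionic products (and not, say, naively on a trace of a longer product), and that the normalization convention embedded in the notation $\det(\,\cdot\,,\partial^2u[n-1])$ matches the first-variation evaluation above. Reducing to diagonal $H$ is the key simplification, since it avoids having to set up a full quaternionic Jacobi formula $\frac{d}{dt}\det(H+tK)=\det H\cdot\mathrm{tr}(H^{-1}K)$; such an identity over $\mathbb H$ is more delicate because the matrix trace is not cyclic in general.
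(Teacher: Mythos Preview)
Your overall strategy---express the left-hand side as the first variation $\tfrac{d}{dt}\big|_{t=0}\det(H+tK)$, diagonalize $H=(u_{i\bar j})$ unitarily via Proposition~\ref{prop26} and Theorem~\ref{thm28}(2), and read off the linear coefficient from the Moore cycle expansion---is exactly the route the paper indicates. The paper itself writes no proof beyond citing \cite{alesker2003quaternionic} and pointing to Propositions~\ref{prop23}, \ref{prop26} and Theorem~\ref{thm28}, so in that sense you are filling in precisely what the paper leaves implicit.

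There is, however, one genuine gap, and it is the very non-commutativity hazard you flag in your last paragraph without resolving. With $v_i=u_{i\bar n}$ and $w_j=a_{\bar j}$, the outer product gives $(vw)_{ij}=v_iw_j=u_{i\bar n}\,a_{\bar j}$, whereas the matrix in the claim has $(i,j)$-entry $a_{\bar j}\,u_{i\bar n}$. Over $\mathbb H$ these are different matrices, so $B\neq vw$ in general, and consequently the clean identity $U^*BU=\tilde v\,\tilde w$ that drives your passage to the diagonal frame is unjustified for the actual $K=B+B^*$ appearing in the statement. Everything downstream---the cyclic rearrangement $\mathrm{Re}(\tilde v_i\tilde w_i)=\mathrm{Re}(\tilde w_i d_i^{-1}\tilde v_i)\,d_i$ and the collapse $\tilde wD^{-1}\tilde v=wH^{-1}v=a_{\bar n}$---is correct \emph{for the symmetrization of $vw$}, but you have not shown that this coincides with the mixed discriminant of the symmetrization of $B$. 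The argument can be repaired (for example, first conjugate by $H^{1/2}$ to reduce to $H=I$, where the linear coefficient is simply the real trace $\sum_i 2\,\mathrm{Re}(a_{\bar i}u_{i\bar n})=2\,\mathrm{Re}\bigl(\sum_i a_{\bar i}(He_n)_i\bigr)$, and then invoke $\mathrm{Re}(pq)=\mathrm{Re}(qp)$ only on scalar products), but as written the factorization step fails and the proof is incomplete at that point.
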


 The Theorem below is very useful in our proof.
              %引理,定理,定义,证明等开始前与上一段文字的距离 8pt%
\begin{thm}\cite{alesker2003non}\label{thm215} (1) The mixed discriminant of positive(resp. non-negative) definite matrices is positive(resp. non-negative).
 (2) Fix positive definite hyperhermitian $(n\times n)$-matrices $A_1,\ldots,A_{n-2}$. On the real linear space of hyperhermitian $(n\times n)$-matrices consider the bilinear form $$B(X,Y):=\mathrm{det}(X,Y,A_1,\ldots,A_{n-2}).$$ Then B is non-degenerate quadratic form, and its signature has one plus and the rest are minueses.
 \end{thm}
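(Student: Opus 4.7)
Both parts follow Aleksandrov's blueprint for real symmetric mixed discriminants, adapted to the hyperhermitian setting using Propositions \ref{prop23} and \ref{prop26} together with Theorem \ref{thm28}. Part (1) supplies the positivity required to set up part (2).

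\textbf{Part (1).} Continuity of the mixed discriminant in the matrix entries reduces the non-negative case to the positive definite case, so assume each $A_i$ is positive definite. I induct on $n$. The base case $n=1$ asserts $\det A_1 > 0$, which follows from Proposition \ref{prop26} and Theorem \ref{thm28}(2). For the inductive step, use Proposition \ref{prop26} to write $A_n = U^{*}DU$ with $D$ diagonal real positive, set $C = D^{1/2}U$, so that $A_n = C^{*}C$. Theorem \ref{thm28}(2) factorizes
\[
\det\Bigl(\sum_i \lambda_i A_i\Bigr) = \det A_n \cdot \det\Bigl(\lambda_n I + \sum_{i<n}\lambda_i B_i\Bigr),\qquad B_i := (C^{*})^{-1}A_i C^{-1},
\]
where each $B_i$ remains positive definite by Proposition \ref{prop23} and Definition \ref{def29}. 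Extracting the coefficient of $\lambda_1\cdots\lambda_n$ reduces positivity of $\det(A_1,\ldots,A_n)$ to positivity of $\det(I, B_1,\ldots,B_{n-1})$. Iterating the reduction on $B_1$ and then invoking the explicit sum-over-permutations formula for the Moore determinant of a matrix with diagonal real entries, one realises the coefficient as a strictly positive elementary-symmetric polynomial in the eventual diagonal eigenvalues.

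\textbf{Part (2).} Symmetry and bilinearity of $B$ are immediate from the definition. Part (1) yields
\[
B(A_1,A_1) = \det(A_1,A_1,A_1,A_2,\ldots,A_{n-2}) > 0,
\]
so $B$ has at least one positive direction. To show this is the only one (and that $B$ is non-degenerate), the decisive tool is the Aleksandrov--Fenchel-type inequality
\[
B(X,Y)^2 \geq B(X,X)\,B(Y,Y),\qquad X\in\mathcal{H}_n,\ Y \text{ positive definite},
\]
with equality iff $X \in \mathbb{R}\cdot Y$. Granting this, any $X$ in the codimension-one subspace $H := \{X : B(A_1,X) = 0\}$ satisfies $0 = B(A_1,X)^2 \geq B(A_1,A_1)\,B(X,X)$, forcing $B(X,X) \leq 0$; equality would force $X \in \mathbb{R}A_1$, but $A_1 \notin H$ since $B(A_1,A_1) > 0$, so $B$ is strictly negative definite on $H$. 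Combined with $B(A_1,A_1) > 0$, this yields both non-degeneracy and signature $(1, \dim_{\mathbb{R}}\mathcal{H}_n - 1)$.

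\textbf{Main obstacle.} The technical heart is the Aleksandrov--Fenchel inequality itself. In the real/complex case one simultaneously diagonalizes $X$ and $Y$ and reduces to Newton's inequality on elementary symmetric polynomials; here, Proposition \ref{prop26} diagonalizes only one hyperhermitian matrix at a time. My plan is to first reduce by congruence (Theorem \ref{thm28}(2)) to $Y = I$, then diagonalize $X$ via Proposition \ref{prop26}, and finally prove the inequality by induction on $n$ with the fixed matrices $A_1,\ldots,A_{n-2}$ absorbed one at a time using multilinearity of the mixed discriminant. Tracking the equality case through this induction is essential for non-degeneracy, and is where the non-commutativity of $\mathbb{H}$ causes the most trouble: a congruence that simplifies one slot generically complicates the others.
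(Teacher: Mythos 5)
This theorem is quoted from \cite{alesker2003non} and not proved in the paper, so there is no in-paper argument to compare against; the proposal has to be judged on its own merits, effectively against Alesker's proof in the cited work.

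Part~(2) has a genuine gap, and it is the step you flag yourself. The Aleksandrov inequality you invoke as ``the decisive tool'' appears in this very paper as a \emph{corollary} of Theorem~\ref{thm215}~(2), and in \cite{alesker2003non} it is deduced from the signature statement rather than the other way round: once $B$ is known to be a non-degenerate real quadratic form of Lorentzian signature $(1,N-1)$, the reverse Cauchy--Schwarz inequality on its timelike cone, together with the equality case, is a matter of indefinite linear algebra and needs no further input. To use the inequality as a premise you would need an independent proof of it, with the equality characterization, first. Your ``Main obstacle'' sketch (congruence to $Y=I$, diagonalize $X$, induct with the fixed $A_i$ absorbed via multilinearity) does not provide this --- you acknowledge that the equality case is unresolved once the non-commutativity of $\mathbb{H}$ blocks simultaneous diagonalization --- and any completion along those lines would essentially re-establish the Lorentzian signature en route, so the detour does not in fact reduce the problem. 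The standard order (Aleksandrov for real symmetric, carried over by Alesker) is the reverse of yours: prove non-degeneracy and the signature by induction on $n$, coupled with a continuity argument connecting an arbitrary tuple of positive definite $A_i$ to $(I,\ldots,I)$, and then read off the Aleksandrov inequality as a formal consequence.

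Part~(1) is sound in outline but hand-waved at its crux. After the congruence $A_n\mapsto I$, ``iterating the reduction on $B_1$'' cannot mean another congruence (that would destroy the $I$ you just created); you can at most diagonalize $B_1$ by an isometry (Proposition~\ref{prop26} preserves $I$), after which $B_2,\ldots,B_{n-1}$ remain arbitrary positive definite hyperhermitian matrices, and the sum-over-permutations formula for the Moore determinant does not then display $\det(I,B_1,\ldots,B_{n-1})$ as a manifestly positive expression. What the induction actually needs is a Jacobi-type identity expanding $\det(I,B_1,\ldots,B_{n-1})$ into a sum of $(n-1)\times(n-1)$ mixed discriminants of principal submatrices of the $B_i$ (which are again positive definite), and this identity must be stated and verified for the Moore determinant rather than gestured at via the permutation formula.
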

             %引理,定理,定义,证明等开始前与上一段文字的距离 8pt%
\begin{cor}\cite{alesker2003non}[Aleksandrov inequality] Let $A_1,\ldots,A_{n-1}$ be positive definite hyperhermitian $(n\times n)$-matrices. Then for any hyperhermitian matrix X we have
$${\mathrm{det}(A_1,\ldots,A_{n-1},X)}^2\geq\mathrm{det}(A_1,\ldots,A_{n-1},A_{n-1})\cdot\mathrm{det}(A_1,\ldots,A_{n-2},X,X),$$
and the equality is satisfied if and only if the matrix X is
proportional to $A_{n-1}$.
\end{cor}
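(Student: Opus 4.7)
The plan is to deduce the inequality as a Lorentzian reverse Cauchy--Schwarz estimate, using directly the signature information already established in Theorem \ref{thm215}. Write $v = A_{n-1}$, $w = X$, and introduce the bilinear form $B(Y,Z) := \mathrm{det}(Y,Z,A_1,\ldots,A_{n-2})$ on the real vector space $\mathcal{H}_n$ of hyperhermitian $n\times n$ matrices. By part (1) of Theorem \ref{thm215}, $B(v,v)=\mathrm{det}(A_{n-1},A_{n-1},A_1,\ldots,A_{n-2})>0$ since every $A_i$ is positive definite; by part (2), $B$ is a non-degenerate symmetric bilinear form of Lorentzian signature $(1,N-1)$ where $N=\dim_\Real \mathcal{H}_n$.

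Next I would run the standard two-plane argument. Consider the restriction of $B$ to $W=\mathrm{span}_\Real(v,w)$. Since the global form has only one positive direction, the restriction admits at most one positive direction; combined with $B(v,v)>0$, its signature is either $(1,1,0)$ or $(1,0,1)$. In either case, the Gram matrix
\[
\begin{pmatrix} B(v,v) & B(v,w) \\ B(v,w) & B(w,w) \end{pmatrix}
\]
has non-positive determinant, i.e.\ $B(v,w)^2 \geq B(v,v)\,B(w,w)$. Unwinding $B$ via the symmetry of the mixed discriminant yields exactly the claimed inequality
\[
\mathrm{det}(A_1,\ldots,A_{n-1},X)^2 \geq \mathrm{det}(A_1,\ldots,A_{n-1},A_{n-1})\cdot\mathrm{det}(A_1,\ldots,A_{n-2},X,X).
\]

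For the equality case, set $u := B(v,v)\,w - B(v,w)\,v$. A direct computation gives $B(v,u)=0$ and $B(u,u)=B(v,v)\bigl(B(v,v)B(w,w)-B(v,w)^2\bigr)=0$. Let $v^\perp := \{z\in\mathcal{H}_n : B(v,z)=0\}$, so that $u\in v^\perp$. Since $B(v,v)>0$, the hyperplane $v^\perp$ cannot contain another $B$-positive direction (else the global signature would have at least two pluses, contradicting Theorem \ref{thm215}(2)); together with non-degeneracy of $B$ and the orthogonal decomposition $\mathcal{H}_n = \Real v \oplus v^\perp$, this forces $B|_{v^\perp}$ to be negative definite. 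Hence $B(u,u)=0$ on $v^\perp$ implies $u=0$, so $w = \bigl(B(v,w)/B(v,v)\bigr)\,v$, i.e.\ $X$ is proportional to $A_{n-1}$. Conversely, if $X = \lambda A_{n-1}$, equality is immediate from the linearity in Proposition~\ref{prop23}'s analogue for mixed discriminants.

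The only non-routine step is the reverse Cauchy--Schwarz passage from the $(1,N-1)$-signature statement to the inequality on a two-plane; once the signature fact of Theorem \ref{thm215}(2) is granted, the rest is linear algebra that transfers verbatim from the Alexandrov--Fenchel argument for real symmetric matrices, since the mixed discriminant $B$ is genuinely a real bilinear form on the real vector space $\mathcal{H}_n$. No quaternionic subtleties intervene at this final step.
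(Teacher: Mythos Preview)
Your argument is correct and is exactly the intended derivation: the paper does not supply an explicit proof but simply records the statement as a corollary of Theorem~\ref{thm215}, and your reverse Cauchy--Schwarz argument from the Lorentzian signature of $B$ is the standard way this implication is carried out in \cite{alesker2003non}. One small fix: in the converse direction you should cite the linearity of the mixed discriminant (Proposition~2.13 in the paper), not Proposition~\ref{prop23}.
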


 For any $\varepsilon>0$, applyinng Theorem \ref{thm215} to $(\varepsilon X+\frac{1}{\varepsilon}Y)(\varepsilon X+\frac{1}{\varepsilon}Y)^*$, we have
                 % 引理,定理,定义,证明等开始前与上一段文字的距离 8pt%
  \begin{cor}\label{cor217} For a fixed $n\times n$ positive definite hyperhermitian matrix $A$ and any two $(n\times n)$-matrices $X,Y$, we have
$$|\mathrm{det}(XY^*+YX^*,A[n-1])|\leq\varepsilon^2\mathrm{det}(XX^*,A[n-1])+\frac{1}{\varepsilon^2}\mathrm{det}(YY^*,A[n-1]).$$
\end{cor}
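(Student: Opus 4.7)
The plan is to exploit the multilinearity of the mixed discriminant together with the positivity statement in Theorem \ref{thm215}(1), following the hint that precedes the corollary. The key observation is that for any $n\times n$ matrix $M$, the product $MM^*$ is non-negative definite hyperhermitian (by the remark after Proposition \ref{prop23} and Definition \ref{def29}), so when paired with the positive definite matrix $A$ repeated $n-1$ times, its mixed discriminant is non-negative.

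First I would apply this with $M=\varepsilon X+\tfrac{1}{\varepsilon}Y$. Expanding,
\begin{equation*}
MM^*=\varepsilon^2 XX^*+XY^*+YX^*+\tfrac{1}{\varepsilon^2}YY^*,
\end{equation*}
and since $XY^*+YX^*$ is visibly hyperhermitian (its adjoint is itself), the real-linearity of the mixed discriminant in its first slot gives
\begin{equation*}
\det(MM^*,A[n-1])=\varepsilon^2\det(XX^*,A[n-1])+\det(XY^*+YX^*,A[n-1])+\tfrac{1}{\varepsilon^2}\det(YY^*,A[n-1]).
\end{equation*}
The left-hand side is $\geq 0$ by Theorem \ref{thm215}(1), which yields the lower bound
\begin{equation*}
\det(XY^*+YX^*,A[n-1])\geq -\varepsilon^2\det(XX^*,A[n-1])-\tfrac{1}{\varepsilon^2}\det(YY^*,A[n-1]).
\end{equation*}

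Next I would rerun the same computation with $M'=\varepsilon X-\tfrac{1}{\varepsilon}Y$; the cross term flips sign while the pure terms $\varepsilon^2 XX^*$ and $\tfrac{1}{\varepsilon^2}YY^*$ are unchanged, and $M'(M')^*\succeq 0$ is still hyperhermitian. The same application of Theorem \ref{thm215}(1) then produces the matching upper bound. Combining the two inequalities gives the desired absolute-value statement. There is no real obstacle here beyond bookkeeping: the only things to verify are that $XY^*+YX^*$ is hyperhermitian (so the mixed discriminant is a real number, making the absolute value meaningful) and that the non-commutativity of $\mathbb{H}$ does not interfere, which it does not because the multilinearity in Proposition \ref{prop23}'s accompanying identities is over the real scalars $\lambda,\mu$ only, and the coefficients $\varepsilon^2$ and $\tfrac{1}{\varepsilon^2}$ are real.
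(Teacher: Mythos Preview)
Your proof is correct and follows exactly the approach the paper indicates in the line preceding the corollary: apply Theorem \ref{thm215}(1) to the non-negative definite matrix $(\varepsilon X+\tfrac{1}{\varepsilon}Y)(\varepsilon X+\tfrac{1}{\varepsilon}Y)^*$ (and, for the other inequality, to $(\varepsilon X-\tfrac{1}{\varepsilon}Y)(\varepsilon X-\tfrac{1}{\varepsilon}Y)^*$), expand using real-linearity of the mixed discriminant, and read off the two bounds.
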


 In the rest of this section, we want to recall some basic definitions and facts from the theory of psh functions of quaternionic variables in \cite{alesker2003non}\cite{alesker2003quaternionic}.
                %引理,定理,定义,证明等开始前与上一段文字的距离 8pt%
\begin{defn}\cite{alesker2003quaternionic} Let $\Omega$ be a bounded domain in $\mathbb{H}^n$. A real valued function $u: \Omega\rightarrow\mathbb{R}$ is called quaternionic $plurisubharmonic(psh)$ if it is upper semi-continuous and its restriction to any right quaternionic line is subharmonic. In particular, we call a $C^2$-smooth function $u: \Omega\rightarrow\mathbb{R}$ to be $strictly$ $plurisubharmonic(spsh)$ if its restriction to any right quaternionic line is strictly subharmonic(i.e., the Laplacian is strictly positive).
\end{defn}

\begin{defn}\cite{alesker2003quaternionic} An open
bounded domain $\Omega\subset\mathbb{H}^n$ with a smooth boundary
$\pt\Omega$ is called $strictly$ $pseudoconvex$ if for every point
$z_0\in\pt\Omega$ there exists a neighborhood $\mathcal{O}$ and a
smooth strictly psh function h on $\mathcal{O}$ such that
$\Omega\bigcap\mathcal{O}={h<0}, h(z_0)=0$, and
$\nabla{h}(z_0)\neq0$.
\end{defn}

 We usually write a quaternion in the following form $$q=t+x\cdot i+y\cdot j+z\cdot k,$$ where $t,x,y,z$ are real numbers, and $i,j,k$ satisfy the usual relations $$i^2=j^2=k^2=-1,\ \ \ ij=-ji=k,\ \ \ jk=-kj=i,\ \ \ ki=-ik=j.$$
 The Dirac-Weyl(or Cauchy-Riemann) operator $\frac{\pt}{\pt\overline{q}}$ is defined as follows. For any $\mathbb{H}$-valued function f
$$\frac{\pt}{\pt\overline{q}}f:=\frac{\pt f}{\pt t}+i\frac{\pt f}{\pt x}+j\frac{\pt f}{\pt y}+k\frac{\pt f}{\pt z}.$$
 Let us also define the operator $\frac{\pt}{\pt q}$:
$$\frac{\pt}{\pt q}f:=\overline{\frac{\pt}{\pt\overline{q}}\overline{f}}=\frac{\pt f}{\pt t}-\frac{\pt f}{\pt x}i-\frac{\pt f}{\pt y}j-\frac{\pt f}{\pt z}k.$$
 In the case of several quaternionic variables, it is easy to check that those two operators above are commutative. For any real valued twice continuously differentiable function f, the marix $\left(\frac{\pt^2 f}{\pt q_i\pt{\overline{q}_j}}\right)(q)$ is hyperhermitian and we also have another way to define psh function:
\begin{prop}\cite{alesker2003non} Let $f\in C^2(\Omega)$ be real valued. f is quaternion psh if and only if at every point $q\in\Omega$ the matrix $\left(\frac{\pt^2 f}{\pt q_i\pt{\overline{q}_j}}\right)(q)$ is non-negative definite.
\end{prop}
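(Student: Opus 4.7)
The plan is to reduce both directions of the equivalence to a single pointwise Laplacian identity. Fix $q_0\in\Omega$ and a direction $\xi=(\xi_1,\ldots,\xi_n)\in\mathbb{H}^n$, parametrize the right quaternionic line through $q_0$ with direction $\xi$ by $p\mapsto q_0+\xi p$ with $p\in\mathbb{H}$, and set $g_\xi(p):=f(q_0+\xi p)$. The target identity is
\[
\Delta_p\,g_\xi(p)\;=\;\sum_{i,j=1}^{n}\overline{\xi_i}\,\frac{\partial^2 f}{\partial q_i\,\partial\overline{q}_j}\bigl(q_0+\xi p\bigr)\,\xi_j,
\]
where $\Delta_p$ denotes the Laplacian in the four real coordinates of $p$. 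Once this identity is in hand, the proposition is immediate: by definition $f$ is psh iff each $g_\xi$ is subharmonic, iff the right-hand side is non-negative for every choice of $q_0$, $\xi$, and $p$, which by Definition \ref{def29} is equivalent to $(f_{i\overline j})$ being non-negative definite at every point of $\Omega$.

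To establish the identity I would proceed in two steps. For the single-variable case $n=1$, a direct expansion of $\tfrac{\partial}{\partial q}\tfrac{\partial}{\partial\overline{q}}$ on a scalar-valued $f$, using only the relations $i^2=j^2=k^2=-1$ and $ij=-ji=k$, $jk=-kj=i$, $ki=-ik=j$, shows that the pure second-order terms combine to $\Delta f=f_{tt}+f_{xx}+f_{yy}+f_{zz}$, the real-imaginary cross-terms cancel because real coefficients commute with $i,j,k$, and the remaining cross-terms cancel in pairs via identities like $-j\,f_{yx}\,i-i\,f_{xy}\,j=k(f_{yx}-f_{xy})=0$; so $\tfrac{\partial^2 f}{\partial q\,\partial\overline q}=\Delta f$. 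A chain-rule computation in real coordinates then gives $\Delta_p[f(\xi p)]=|\xi|^2\Delta f(\xi p)=\overline\xi\,f_{q\overline q}(\xi p)\,\xi$, settling $n=1$. For general $n$ I would normalize $|\xi|=1$, extend $\xi$ to a hyperhermitian orthonormal basis $\xi^{(1)}=\xi,\xi^{(2)},\ldots,\xi^{(n)}$ of $\mathbb{H}^n$, and let $C$ be the quaternionic matrix whose columns are these vectors (so $C^{*}C=I$). In the new variables $q=q_0+Cp$ the original line is the first coordinate axis, and the target identity reduces to the $n=1$ case applied to $\tilde f(p):=f(q_0+Cp)$ in its first variable, provided one knows the transformation rule
\[
(\tilde f_{i\overline j})(p) \;=\; C^{*}\,(f_{i\overline j})(q_0+Cp)\,C,
\]
the Hessian-level analogue of Proposition \ref{prop23}.

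The main technical obstacle is precisely this chain-rule identity. The Dirac-Weyl operator $\partial/\partial\overline{q}_i$ does not satisfy a naive chain rule when the change of variables is quaternionic-linear, because the non-commutativity of $\mathbb{H}$ forces one to track the side on which each quaternionic coefficient is multiplied when applying $\partial/\partial q_i$ to $\mathbb{H}$-valued expressions. The safest route is to expand $\Delta_p g_\xi$ directly in the real coordinates, using the real-matrix representation $M_\eta$ of left multiplication by $\eta\in\mathbb{H}$ (so that the real coordinates of $\xi_k p$ are $M_{\xi_k}$ applied to those of $p$), the identity $M_{\xi_k}\,M_{\xi_l}^{T}=M_{\xi_k\overline{\xi_l}}$, and the decomposition of $M_\eta$ into symmetric and anti-symmetric parts (namely $\mathrm{Re}(\eta)\,I$ plus the traceless antisymmetric piece coming from $\mathrm{Im}(\eta)$). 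The key algebraic input that matches the outcome with $\sum\overline{\xi_i}\,f_{i\overline j}\,\xi_j$ is the identity $\eta-i\eta i-j\eta j-k\eta k=4\,\mathrm{Re}(\eta)$ for every $\eta\in\mathbb{H}$, which forces the imaginary contributions of $M_{\xi_k\overline{\xi_l}}$ to cancel against the symmetric Hessian of $f$.
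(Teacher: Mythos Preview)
The paper does not supply its own proof of this proposition: it is quoted verbatim from Alesker's paper \cite{alesker2003non} and immediately followed by the next cited result (the minimum principle), with no intervening argument. So there is nothing in the present paper to compare your attempt against.

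That said, your outline is the standard route to this fact and is essentially correct. The single-variable identity $\partial_q\partial_{\overline q}f=\Delta f$ for real-valued $f$ is exactly the computation you describe, and the reduction of the general case to the pointwise formula $\Delta_p\,g_\xi=\xi^{*}(\partial^2 f)\xi$ is the right strategy. Your diagnosis of the only genuine technical point---the Hessian transformation law $(\tilde f_{i\overline j})=C^{*}(f_{i\overline j})C$ under a quaternionic-linear change of coordinates $q=q_0+Cp$---is accurate; this identity is precisely what Alesker establishes (it is the Hessian-level content behind Proposition~\ref{prop23}), and once it is granted the equivalence is immediate from Definition~\ref{def29}. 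Your alternative suggestion of a direct real-coordinate computation via the left-multiplication matrices $M_{\xi_k}$ and the identity $M_{\xi_k}M_{\xi_l}^{T}=M_{\xi_k\overline{\xi_l}}$ also works and avoids invoking the transformation law as a black box; it is more laborious but entirely self-contained. Either path is sound.
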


 Last, we want to state the $minimum$ $principle$ to end this section.
\begin{thm}\cite{alesker2003non} Let $\Omega$ be a bounded open set in $\mathbb{H}^n$. If $u,v$ are continuous functions on $\overline\Omega$ which are psh in $\Omega$ and satisfy that $$\mathrm{det}\left(\frac{\pt^2 u}{\pt q_i\pt\overline{q}_j}\right)\leq\mathrm{det}\left(\frac{\pt^2 v}{\pt q_i\pt\overline{q}_j}\right) in \Omega.$$ Then $$\mathrm{min}\{u(z)-v(z)|z\in\overline\Omega\}=\mathrm{min}\{u(z)-v(z)|z\in\pt\Omega\}.$$
\end{thm}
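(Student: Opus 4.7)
The plan is to argue by contradiction, first perturbing $v$ to a strictly psh $v_\epsilon$ which still has its Moore determinant bounding $\det(u_{i\bar j})$ from above but now with a strict positive gap, and which still puts the minimum of $u - v_\epsilon$ strictly inside $\Omega$. The contradiction will then come from the second derivative test at the interior minimum combined with the monotonicity of the Moore determinant on ordered non-negative definite hyperhermitian matrices. I will carry this out in the $C^2$ case and indicate how to reduce the merely continuous case to it by mollification.

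Assume for contradiction that $m := \min_{\overline\Omega}(u-v) < \min_{\partial\Omega}(u-v) =: m'$. Set $M := \max_{\overline\Omega}|q|^2$ and, for any $0 < \epsilon < (m'-m)/M$, define $v_\epsilon(q) := v(q) + \epsilon|q|^2$. Picking a point $q_\ast \in \Omega$ where $u-v$ attains $m$, we have $(u-v_\epsilon)(q_\ast) \leq m$, while on $\partial\Omega$ the same function is $\geq m' - \epsilon M > m$, so $u - v_\epsilon$ attains its $\overline\Omega$-minimum at some interior $q_0 \in \Omega$. A direct computation of the Dirac-Weyl derivatives gives $(|q|^2)_{i\bar j} = 8\delta_{ij}$, hence $(v_\epsilon)_{i\bar j} = v_{i\bar j} + 8\epsilon I$, and multilinearity of the mixed discriminant together with the non-negativity of mixed discriminants of non-negative definite matrices (Theorem \ref{thm215}(1)) yields
\[
\det\bigl((v_\epsilon)_{i\bar j}\bigr) = \sum_{k=0}^{n}\binom{n}{k}(8\epsilon)^{k}\,\mathrm{det}\bigl((v_{i\bar j})[n-k],I[k]\bigr) \geq \det(v_{i\bar j}) + (8\epsilon)^{n}.
\]

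At $q_0$ the ordinary real Hessian of $u - v_\epsilon$ is non-negative definite, so its restriction to each right quaternionic line has non-negative Laplacian, and the $C^2$ characterization of psh via the hyperhermitian Hessian implies that $X := (u_{i\bar j})(q_0) - ((v_\epsilon)_{i\bar j})(q_0)$ is non-negative definite. Writing $A := (u_{i\bar j})(q_0)$ and $B := ((v_\epsilon)_{i\bar j})(q_0)$ and expanding $\det A = \det(B + X) = \sum_{k=0}^n \binom{n}{k}\,\mathrm{det}(X[k], B[n-k])$, Theorem \ref{thm215}(1) again gives $\det A \geq \det B$. Chaining with the perturbation estimate and the hypothesis $\det(v_{i\bar j}) \geq \det(u_{i\bar j})$,
\[
\det(u_{i\bar j})(q_0) \geq \det(v_{i\bar j})(q_0) + (8\epsilon)^{n} \geq \det(u_{i\bar j})(q_0) + (8\epsilon)^{n},
\]
which is absurd.

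For merely continuous $u, v$, I would mollify on subdomains $\Omega' \Subset \Omega$ by a radially symmetric non-negative smooth kernel: since the restriction of a psh function to a right quaternionic line is subharmonic and convolution preserves subharmonicity, the smoothings $u_\delta, v_\delta$ are smooth psh functions decreasing to $u, v$ on $\Omega'$, and one applies the $C^2$ case on $\Omega'$ before exhausting $\Omega$ and passing to the limit. The main obstacle is precisely this last step: interpreting and preserving the determinant inequality $\det(u_{i\bar j}) \leq \det(v_{i\bar j})$ under mollification requires the quaternionic analogue of Bedford-Taylor's weak Monge-Amp\`ere operator together with weak continuity of the associated measures under monotone limits. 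In the $C^2$ setting, which is what is actually needed for the uniqueness part of Theorem \ref{thm11}, the argument reduces to the short contradiction above, whose only nontrivial algebraic input is the non-negativity of mixed discriminants of non-negative definite hyperhermitian matrices from Theorem \ref{thm215}(1).
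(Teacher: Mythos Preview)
The paper does not prove this theorem; it is quoted from Alesker's 2003 paper as a background result at the end of Section~2, so there is no in-paper argument to compare yours against.

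Your $C^2$ argument is the standard Bedford--Taylor style perturbation proof and is correct. The constant you compute, $(|q|^2)_{i\bar j}=8\delta_{ij}$, agrees with the operators $\partial/\partial q$, $\partial/\partial\bar q$ as defined in Section~2 (the paper's own computation in Lemma~\ref{lem32} appears to silently drop this factor, but that is harmless there). The two substantive steps---that the real second-derivative test at an interior minimum forces $(u-v_\epsilon)_{i\bar j}\geq 0$ via the restriction-to-quaternionic-lines characterization, and that $\det(B+X)\geq\det B$ for non-negative definite hyperhermitian $B,X$ via the mixed discriminant expansion and Theorem~\ref{thm215}(1)---are both valid.

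Your honest flag on the continuous case is exactly the point: the statement as written covers merely continuous psh $u,v$, for which $\det(\partial^2 u/\partial q_i\partial\bar q_j)$ must be interpreted as a measure via a quaternionic Bedford--Taylor construction, and passing the inequality through mollification requires weak continuity of that operator under monotone decreasing limits. Alesker's original proof in \cite{alesker2003non} builds precisely this machinery. For the use made in this paper (uniqueness in Theorem~\ref{thm11}, where both competitors are $C^\infty(\overline\Omega)$), your $C^2$ contradiction argument is already sufficient, so the reduction you sketch is not actually needed here.
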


\section{$C^1$ estimates and partial $C^2$ estimates}
 In order to use the continuity method, it is well known that it suffices to prove priori estimates up to the second-order. We will take three steps to achieve this goal.
\\[8pt]                %引理,定理,定义,证明等开始前与上一段文字的距离 8pt%
{\bf  Step 1\ \ Reduce the global 1st-order priori estimates to the
boundary ones. }\\

 First, let us state the main thoerem in step 1.
              %引理,定理,定义,证明等开始前与上一段文字的距离 8pt%
\begin{thm}\label{thm31} Suppose that a psh function $u\in
C^2(\overline\Omega)$ satisfies (1.3). Then $$\|u\|_{C^1}\leq C,$$
where the constant C only depends on $\|f\|_{C^0}$, $\|f\|_{C^1}$ and
$\Omega$.
\end{thm}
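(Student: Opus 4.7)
The plan is to first pin down the $C^0$ bound by comparison with the subsolution and by Euclidean subharmonicity, and then reduce the interior gradient control to a boundary gradient control via a maximum-principle argument applied to a well-chosen auxiliary function.

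For the $C^0$ estimate I would proceed as follows. Since $(u_{i\bar j})$ is non-negative definite, its trace is a positive multiple of the Euclidean Laplacian of $u$ on $\mathbb{R}^{4n}$; hence $u$ is Euclidean subharmonic and attains its maximum on $\partial\Omega$, giving $\max_{\overline\Omega} u = \max_{\partial\Omega}\varphi$. For the lower bound, I would apply the minimum principle stated at the end of Section~2 to $u$ and $\underline u$: combining $\det u_{i\bar j} = f(q,u)$, $\det\underline u_{i\bar j}\geq f(q,\underline u)$, the common boundary data $u = \underline u = \varphi$, and the monotonicity $f_u\geq 0$, a standard contradiction at an interior minimum of $u - \underline u$ yields $u\geq\underline u$ on $\overline\Omega$, and hence $\min_{\overline\Omega} u \geq \min_{\overline\Omega}\underline u$.

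For the gradient estimate I would introduce an auxiliary function of the form
\begin{equation*}
w(q) = |\nabla u(q)|^2\,\exp(\psi(u(q))),
\end{equation*}
with $\psi$ to be chosen (typically $\psi(t) = -At$ for a large constant $A$ depending on $\|u\|_{C^0}$ and $\|f\|_{C^1}$). If $w$ attains its maximum on $\partial\Omega$, the interior bound on $|\nabla u|$ follows from the boundary bound and the already-established $C^0$ control, which is precisely the reduction Step~1 is named for. If the maximum lies at an interior point $q_0$, I would apply the linearized operator $L := u^{i\bar j}\,\partial_{\bar j}\partial_i$, with $(u^{i\bar j})$ the inverse of the hyperhermitian Hessian $(u_{i\bar j})$, to $w$ at $q_0$, using $Lw(q_0) \leq 0$. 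Differentiating $\log\det(u_{i\bar j}) = \log f$ once produces the identity $u^{i\bar j} u_{i\bar j,s} = (\log f)_s$. Substituting this together with the trace identity $Lu = n$ into the expansion of $Lw$ yields an inequality whose leading contribution is $n\,\psi'(u)\,w$; choosing $-\psi'$ large enough to dominate the cross terms involving $|\nabla\log f|\cdot|\nabla u|$ (where $\|f\|_{C^1}$ enters) forces an absolute bound on $|\nabla u|(q_0)$, and hence globally.

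The principal obstacle will be the non-commutativity of $\mathbb H$ when differentiating the Moore determinant. The Jacobi-type identity behind $u^{i\bar j} u_{i\bar j,s} = (\log f)_s$ must be justified via the permutation-expansion formula of Section~2 and the real-diagonalisability of hyperhermitian matrices (Proposition~\ref{prop26}); in particular, one must verify that $u^{i\bar j} u_{i\bar j,s}$ is a \emph{real} quantity despite being built from quaternion-valued matrix entries, so that the chain rule applied to $\log\det$ makes sense. Once this algebraic point is settled, the remainder is a direct quaternionic transcription of the complex Monge--Amp\`ere gradient estimate of~\cite{caffarelli1985dirichlet}, yielding the stated dependence of $C$ on $\|f\|_{C^0}$, $\|f\|_{C^1}$ and $\Omega$ (the latter implicitly encoding the boundary value $\varphi$ and the subsolution $\underline u$).
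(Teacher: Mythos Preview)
Your $C^0$ argument matches the paper's: the upper barrier is taken as the harmonic extension $h$ of $\varphi$ (equivalent to your subharmonicity argument), and the lower barrier is the subsolution $\underline u$ via the comparison principle using $f_u\geq 0$.

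For the gradient, the paper does \emph{not} run a Bernstein argument on $|\nabla u|^2 e^{\psi(u)}$. Instead it works one real direction at a time: for each $D=\partial/\partial x_i$ it applies the linearized operator $L$ to the function $\pm Du + e^{\lambda|q|^2}$, checks that $L(\pm Du)=\pm D(\log f)$ is bounded while $L(e^{\lambda|q|^2})\geq \lambda e^{\lambda|q|^2}\sum u^{i\bar i}\geq n\lambda f^{-1/n}e^{\lambda|q|^2}$, and concludes $L(\pm Du+e^{\lambda|q|^2})>0$ for $\lambda$ large. The maximum principle then yields $\max_{\overline\Omega}|Du|\leq\max_{\partial\Omega}|Du|+C$ (Lemma~\ref{lem32}). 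The boundary gradient is read off directly from $\underline u\leq u\leq h$ with common boundary values, giving $|Du|\leq\max(|D\underline u|,|Dh|)$ on $\partial\Omega$.

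Your Bernstein scheme is a genuinely different route, but the sketch has a gap. With $\psi(t)=-At$ the term $n\psi'(u)w$ is large \emph{negative}, so the inequality $Lw(q_0)\leq 0$ at an interior maximum imposes no constraint; you need $\psi'>0$ to force a bound. Even with the sign corrected, expanding $Lw$ and using the critical-point relation $\nabla(|\nabla u|^2)=-\psi'|\nabla u|^2\nabla u$ produces a negative term $-(\psi')^2|\nabla u|^2\,u^{i\bar j}u_iu_{\bar j}$ that must be absorbed by the good third-order term $2\sum_s u^{i\bar j}(u_{x_s})_i(u_{x_s})_{\bar j}$ via a Cauchy--Schwarz argument you have not supplied; in the quaternionic setting this step needs care because the entries are non-commuting. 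The paper's barrier is linear in $Du$, so none of these quadratic cross terms appear, and the only algebraic identity needed is the first-variation formula $nf^{-1}\det(\partial^2(Du),\partial^2u[n-1])=D(\log f)$, which follows from the mixed-discriminant machinery of Section~2. This is why the CKNS-style proof is preferred here.
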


 To prove Theorem \ref{thm31}, we need the following important lemma:
               %引理,定理,定义,证明等开始前与上一段文字的距离 8pt%
\begin{lem}\label{lem32} Let $D$ be a first-order differential
operator of the form $D=\frac{\mathrm{d}}{\mathrm{d}x_i}$, where $x_i$
is one of the real coordinate axes in $\mathbb{H}^n$. Then we have
$$\max_{\overline{\Omega}}|Du|\leq\max_{\pt\Omega}|Du|+C,$$
where C is a contant dedpending only on $\|f\|_{C^0}$, $\|f\|_{C^1}$
and $\Omega$.
\end{lem}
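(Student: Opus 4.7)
The plan is a standard linearization plus maximum principle argument, adapted to the hyperhermitian setting. First, I differentiate \eqref{eq:13} with respect to $x_i$. Since $D=d/dx_i$ is a real partial derivative, it commutes with each $\partial^2/\partial q_k\partial\overline{q}_l$, so $D(u_{k\overline{l}})=(Du)_{k\overline{l}}$. Using the derivative rule for the Moore determinant (from the multilinearity of the mixed discriminant: if $A(t)$ is a family of hyperhermitian matrices then $\frac{d}{dt}\det A(t)=n\det(\dot A(t),A(t)[n-1])$), I obtain the linearized equation
\[
L(Du):=n\det\bigl((Du)_{k\overline{l}},\,(u_{k\overline{l}})[n-1]\bigr)=f_{x_i}(q,u)+f_u(q,u)\,Du.
\]
By Theorem \ref{thm215}(1) this $L$ is a second-order linear elliptic operator acting on real-valued functions (ellipticity amounts to strict positivity of the bilinear form $\det(\,\cdot\,,(u_{k\overline{l}})[n-1])$ on positive definite hyperhermitian matrices).

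Next, I introduce a barrier. Fix $R>0$ with $\Omega\subset B_R(0)$, and set
\[
\Phi^{\pm}(q):=\pm Du(q)+K\bigl(|q|^2-R^2\bigr),
\]
where $K>0$ is to be chosen large. Since $|q|^2\le R^2$ on $\overline\Omega$, the barrier term is non-positive. A direct computation using $(|q|^2)_{k\overline{l}}=\delta_{kl}$ and the linearized equation gives
\[
L\Phi^{\pm}-f_u\Phi^{\pm}=Kf_u(R^2-|q|^2)\pm f_{x_i}+K\cdot n\det\bigl(I,(u_{k\overline{l}})[n-1]\bigr).
\]
The first term on the right is non-negative because $f_u\ge 0$ (here is where the hypothesis on $f_u$ is essential), and the last is controlled from below: diagonalizing $(u_{k\overline{l}})$ via an orthonormal change of basis (Proposition \ref{prop26}) to eigenvalues $\lambda_1,\ldots,\lambda_n>0$, and using the transformation rule for mixed discriminants inherited from Theorem \ref{thm28}(2), one gets $n\det(I,(u_{k\overline{l}})[n-1])=e_{n-1}(\lambda)$. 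Newton's (Maclaurin's) inequality for elementary symmetric polynomials of positive reals then yields
\[
e_{n-1}(\lambda)\ge n\bigl(\lambda_1\cdots\lambda_n\bigr)^{(n-1)/n}=n\,f^{(n-1)/n}\ge n\,f_{\min}^{(n-1)/n},
\]
with $f_{\min}:=\min_{\overline\Omega}f(q,u)>0$ finite and positive, given the a priori $C^0$ bound on $u$ (from the subsolution).

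Choosing $K\ge\|f\|_{C^1}/(n f_{\min}^{(n-1)/n})$ makes $(L-f_u)\Phi^{\pm}\ge 0$ with non-positive zeroth-order coefficient $-f_u\le 0$. The classical weak maximum principle then gives
\[
\max_{\overline\Omega}\Phi^{\pm}\le\max_{\partial\Omega}\bigl(\Phi^{\pm}\bigr)^+\le\max_{\partial\Omega}|Du|+KR^2,
\]
and unwrapping the definition of $\Phi^{\pm}$ yields $|Du(q)|\le\max_{\partial\Omega}|Du|+2KR^2$ for every $q\in\overline\Omega$. This is the claimed inequality with $C=2KR^2$, depending only on $\Omega$, $\|f\|_{C^0}$, and $\|f\|_{C^1}$ (through $K$ and $f_{\min}$).

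The main obstacle is selecting the correct auxiliary function: the problem with applying the maximum principle directly to $\pm Du$ is that the sign of $f_u\cdot Du$ in the linearized equation cannot be controlled when $Du$ and $f_u$ are both unsigned or when $f_u=0$. The shift by $K(|q|^2-R^2)$ solves this because the induced term $Kf_u(R^2-|q|^2)$ has the favourable sign exactly under the hypothesis $f_u\ge 0$. The one genuinely quaternion-algebra input beyond the maximum principle formalism is the Newton-type bound $n\det(I,A[n-1])\ge n(\det A)^{(n-1)/n}$, which reduces to the classical real Newton inequality after orthonormal diagonalization via Proposition \ref{prop26}.
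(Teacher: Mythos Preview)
Your argument is correct and follows essentially the same route as the paper: linearize the equation, add a barrier depending on $|q|^2$, use the AM--GM/Maclaurin inequality $n\det(I,(u_{k\bar l})[n-1])\ge n f^{(n-1)/n}$ (equivalently $\sum u^{i\bar i}\ge nf^{-1/n}$) to make the second-order contribution dominate, and conclude by the maximum principle. The only cosmetic differences are that the paper linearizes $\log\det-\log f$ and uses the exponential barrier $e^{\lambda|q|^2}$, whereas you linearize $\det$ directly and use the polynomial barrier $K(|q|^2-R^2)$; your choice has the mild advantage that the zeroth-order term $Kf_u(R^2-|q|^2)$ is automatically favourable, so you only need $K$ large enough to absorb $|f_{x_i}|$.
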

               %引理,定理,定义,证明等开始前与上一段文字的距离 8pt%
\begin{proof} Let $L$ be the linearization of the operator $v\mapsto\log(\det(\pt^2v))-\log{f(q,v)}$ at $u$. Explicitly we can write this operator
$$Lv=nf^{-1}\det(\pt^2v,\pt^2u[n-1])-f^{-1}f_{u}v\triangleq (L_0-f^{-1}{f_u})v.$$
Consider the function $\psi=\pm Du+e^{\lambda|q|^2}$, with $\lambda\gg0$ to be determined. One can easily check
$$L_0(Du)=nf^{-1}\det(\pt^2(Du),\pt^2u[n-1])=f^{-1}D(\det(\pt^2u))=D(\log f)$$
and
\begin{align*}
L_0(e^{\lambda|q|^2})&=nf^{-1}\det\left(\left(e^{\lambda|q|^2}\lambda\delta_{ij}+\lambda^2e^{\lambda|q|^2}\overline{q}_iq_j\right),\pt^2u[n-1]\right)\\
&\geq\lambda{e^{\lambda|q|^2}}nf^{-1}\det(I,\pt^2u[n-1])=\lambda{e^{\lambda|q|^2}}\sum{u^{i\overline{i}}},
\end{align*}
where the matrix $(u^{ij})$ and $I$ denote the inverse of the matrix $(u_{ij})$ and the identity matrix, respectively.  Then we have
\begin{align*}
L\psi&=(L_0-f^{-1}{f_u})(\pm Du+e^{\lambda|q|^2})\\
&=\pm L_0(Du)\mp f^{-1}{f_u}Du+L_0(e^{\lambda|q|^2})-f^{-1}{f_u}e^{\lambda|q|^2}\\
&=\pm{D(\log{f})}\mp{f^{-1}}{f_u}Du+L_0(e^{\lambda|q|^2})-f^{-1}{f_u}e^{\lambda|q|^2}\\
&\geq -(\min f)^{-1}{{\|f\|}_{C^1}}+L_0(e^{\lambda|q|^2})-f^{-1}{f_u}e^{\lambda|q|^2}\\
&\geq -C+e^{\lambda|q|^2}(\lambda\sum{u^{i\overline{i}}}-f^{-1}{f_u})\\
&\geq -C+e^{\lambda|q|^2}(n\lambda{f^{-\frac{1}{n}}}-f^{-1}{f_u}).
\end{align*}
 Since $u$ is controlled by the barrirers $h$ and $\underline{u}$, which we will discuss in the proof of Theorem \ref{thm31}, i.e. $$\underline{u}\leq u\leq h,$$ we get $f(q,u)>0$ and $f_u$ are bounded on the bounded domain $\Omega\times [\min\underline{u},\max h]$. Then we can choose a large $\lambda$ to make the last expression positive.
For such a $\lambda$, by maximum principle, the function $\psi$
achieves its maximum on the boundary $\pt{\Omega}$. This proves
Lemma \ref{lem32}.
\end{proof}
               %引理,定理,定义,证明等开始前与上一段文字的距离 8pt%
\begin{proof}[Proof of Theorem \ref{thm31}]
By Lemma \ref{lem32}, it is sufficient to
estimate $\max\limits_{\pt{\Omega}}|Du|$. Let $h$ be a harmonic
function in $\Omega$ which extends $\varphi$. Then $u\leq{h}$.
Besides, we also have $u\geq\underline{u}$ by minimum principle and
the assumption of Theorem 1.1. Here we used the Comparison Principle for fully nonlinear equations in the book of D. Gilberge and N. S. Trudinger \cite{gilbarg2001elliptic}(Theorem 17.1 of Page 443) on $F(x,u)=\det(u_{i\overline{j}})-f(x,u)$ and the assumption of $f_u=\frac{\pt{f}}{\pt{u}}\geq0$. Hence,
$$\max_{\pt{\Omega}}|Du|\leq\max_{\pt\Omega}\{|Dh|,D{\underline{u}}|\}.$$
Thus Theorem \ref{thm31} is proved.
\end{proof}
                %引理,定理,定义,证明等开始前与上一段文字的距离 8pt%
{\bf  Step 2\ \ Reduce the global 2nd-order priori estimates to the
boundary ones. }\\

 Note that, to get the second-order priori estimate of $u$, it is sufficient to prove an upper estimate on it. In fact, let $q_l=t+x\cdot i+y\cdot j+z\cdot k$ be one of the quaternionic coordinates. $\triangle{u}\geq0$ and the upper estimates on the second derivatives of the form $D^2u$ imply the lower estimates on them. The estimate on the mixed derivatives also can be obtained easily since $$2u_{tx}=(\frac{\pt}{\pt{t}}+\frac{\pt}{\pt{x}})^2u-u_{tt}-u_{xx}.$$
 Hence we only need to prove an upper estimate of $D^2u$ on $\pt\Omega$ because of the following lemma:
              %引理,定理,定义,证明等开始前与上一段文字的距离 8pt%
\begin{lem}\label{lem33} For a constant C only depending on
$\|f\|_{C^0}$, $\|f\|_{C^1}$,  $\|f\|_{C^2}$ and $\Omega$, we have
$$\max_{\overline\Omega}D^2u\leq\max_{\pt\Omega}D^2u+C.$$
\end{lem}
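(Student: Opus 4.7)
The plan is to mimic the auxiliary-function argument of Lemma \ref{lem32}, with $D^2u$ replacing $Du$. Let $L=L_0-f^{-1}f_u$ denote the linearized operator introduced in the proof of Lemma \ref{lem32}, and set
\[
\psi=D^2u+e^{\lambda|q|^2}
\]
for a large $\lambda>0$ to be chosen. The aim is to show $L\psi>0$ in $\Omega$; since $L$ has non-positive zeroth-order coefficient $-f^{-1}f_u\leq 0$, the weak maximum principle then rules out an interior non-negative maximum, forcing $\max_{\overline{\Omega}}\psi$ to be attained on $\pt\Omega$. As $e^{\lambda|q|^2}$ oscillates by a constant depending only on $\lambda$ and $\mathrm{diam}(\Omega)$, the desired estimate follows.

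The crux of the argument is the lower bound $L(D^2u)\geq -C$. Recall from the proof of Lemma \ref{lem32} that $L_0(Du)=D\log f$. Differentiating this identity once more along $D$ and using the mixed-discriminant expansion of the Moore determinant gives
\[
L_0(D^2u)=D^2\log f-\frac{n(n-1)\,\mathrm{det}(\pt^2Du,\pt^2Du,\pt^2u[n-2])}{\mathrm{det}\,\pt^2u}+(L_0(Du))^2.
\]
Applying the Aleksandrov inequality (the corollary to Theorem \ref{thm215}) with $A_1=\cdots=A_{n-1}=\pt^2u$ and $X=\pt^2Du$ shows that the middle term is at most $\tfrac{n-1}{n}(L_0(Du))^2$, yielding the concavity bound $L_0(D^2u)\geq D^2\log f$. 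Expanding $D^2\log f(q,u)=f^{-1}f_u\,D^2u+R$, where the remainder $R$ is controlled by $\|f\|_{C^2}$, $\|f\|_{C^1}$, $|Du|$ and $1/\min f$---all under control thanks to Theorem \ref{thm31}---produces $L(D^2u)=L_0(D^2u)-f^{-1}f_u\,D^2u\geq -C$.

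From Lemma \ref{lem32} we already have $L(e^{\lambda|q|^2})\geq e^{\lambda|q|^2}(n\lambda f^{-1/n}-f^{-1}f_u)$, using AM--GM in the form $\sum u^{i\bar{i}}\geq nf^{-1/n}$. Taking $\lambda$ sufficiently large (depending on $\|f\|_{C^0}$, $\|f\|_{C^1}$, $\min f$ and the constant from the previous paragraph) makes this term dominate and yields $L\psi>0$ in $\Omega$. The weak maximum principle then gives $\max_{\overline{\Omega}}\psi\leq \max_{\pt\Omega}\psi$ in the non-trivial case where $\psi$ attains a non-negative maximum (and the other case is vacuous), and unpacking the definition of $\psi$ produces $\max_{\overline{\Omega}}D^2u\leq \max_{\pt\Omega}D^2u+C$.

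The main obstacle is the concavity step $L_0(D^2u)\geq D^2\log f$. In the real and complex settings this follows from a direct Cauchy--Schwarz-type identity on matrix entries, e.g.\ $u^{i\bar{k}}u^{l\bar{j}}Du_{k\bar{l}}Du_{i\bar{j}}\geq 0$; however, the non-commutativity of $\mathbb{H}$ obstructs an entrywise argument of this shape. The hyperhermitian mixed-discriminant theory reviewed in Section 2---in particular the Aleksandrov inequality---is precisely the substitute that makes this step go through in the quaternionic framework.
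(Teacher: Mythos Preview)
Your proof is correct and follows the same scheme as the paper's: the auxiliary function $\psi=D^2u+e^{\lambda|q|^2}$, the concavity inequality $L_0(D^2u)\geq D^2\log f$, the cancellation of the $f^{-1}f_uD^2u$ terms, and the maximum principle. The only difference is that where the paper simply cites $L_0(D^2u)\geq D^2\log f$ from Alesker's earlier work (Lemma~7.5 in \cite{alesker2003quaternionic}, Theorem~1.1.17 in \cite{alesker2003non}), you supply a self-contained derivation via the quaternionic Aleksandrov inequality of Section~2; this is a nice touch, since it keeps the argument internal to the paper and makes explicit why the non-commutative obstruction to the usual entrywise Cauchy--Schwarz computation is not fatal.
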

   %引理,定理,定义,证明等开始前与上一段文字的距离 8pt%
\begin{proof}  From Lemma 7.5 in \cite{alesker2003quaternionic} or Theorem1.1.17 in \cite{alesker2003non}, we know
 \begin{equation}\label{eq:3}
L_0(D^2u)\geq D^2(\log f).
\end{equation} This implies
 \begin{align*}
 L(D^2u+e^{\lambda|q|^2})&\geq D^2(\log f)-f^{-1}{f_u}(D^2u)+\lambda e^{\lambda|q|^2}\sum{u^{ii}}-f^{-1}{f_u}e^{\lambda|q|^2}\\
 &=-f^{-2}{|Df|^2}+f^{-1}(D(f_qDq)+D(f_u)Du)+e^{\lambda|q|^2}(\lambda\sum{u^{ii}}-f^{-1}{f_u})\\
 &\geq-C+e^{\lambda|q|^2}(n\lambda f^{-\frac{1}{n}}-f^{-1}{f_u})>0,\ \ \text{for a large}\ \ \lambda.
 \end{align*}
By maximum principle, we complete the proof.
\end{proof}
                %引理,定理,定义,证明等开始前与上一段文字的距离 8pt%
{\bf  Step 3\ \ Prove the boundary estimates for second derivatives. }\\

 In this step, we need to derive priori estimates for three kinds of second derivatives: pure tangential derivatives,
 mix derivatives and pure normal derivatives. Since the last ones are more
 complicated, we treat them in the next section. From now on, we will denote the quaternionic units as follows:
$$e_0=1,\ \ \ e_1=i,\ \ \ e_2=j,\ \ \ e_3=k.$$ Fix an arbitrary point $P\in\pt\Omega$, we can choose such a coordinate system $(q_1,\cdots,q_n)$ near this point that the inner normal to $\pt\Omega$ at $P$ coincides with the axis $x^0_n$. we can also assume $P$ to be the origin. For the sake of convenience, we set $$t_1=x^0_1,t_2=x^1_1,t_3=x^2_1,\cdots,t_{4(n-1)}=x^3_{n-1},t_{4n-3}=x^1_n,t_{4n-2}=x^2_n,t_{4n-1}=x^3_n,t_{4n}=x^0_n,$$
and $$t=(t',t_{4n}),\ \ \
t'=(t_1,\cdots,t_\alpha,\cdots,t_\beta,\cdots,t_{4n-1}),\ \ \
\alpha,\beta=1,\cdots,4n-1.$$
\\[8pt]
 {\bf{Part I\ \ \ $|u_{t_{\alpha}t_{\beta}}|(0)<C\sim\|u\|_{C^1},\|\underline{u}\|_{C^1},\|\underline{u}\|_{C^2},\Omega$. }}\\

 As in \cite{caffarelli1985dirichlet}\cite{guan2010complex}, we can write $u-\underline{u}=\tau\sigma$, where $\tau$ is a smooth function and $\sigma$ is the defining function of $\Omega$ with $|\nabla\sigma|=1$. From
 \begin{equation}
 u_{t_it_j}(0)=\underline{u}_{t_it_j}(0)+\tau(0)\sigma_{t_it_j}(0),
 \end{equation}
  \begin{equation}
 (u-\underline{u})_{x_n^0}=\tau_{x_n^0}\sigma+\tau\sigma_{x_n^0},\ \ \ \ \ \ (u-\underline{u})_{x_n^0}(0)=-\tau(0),
 \end{equation}
 we have $|u_{t_{\alpha}t_{\beta}}|<C\sim\|u\|_{C^1},\|\underline{u}\|_{C^1},\|\underline{u}\|_{C^2},\Omega$.\\
 \\[8pt]
 {\bf{Part II}\ \ \ $|u_{t_{\alpha}x^0_n}|(0)<C\sim\|u\|_{C^1},\|\underline{u}\|_{C^1},\|\underline{u}\|_{C^2},\|\underline{u}\|_{C^3},\|f\|_{C^0},\|f\|_{C^1} ,\Omega$. }\\

 Firstly, consider
 \begin{equation}
 h=\pm{T(u-\underline{u})}+\sum\limits_{l=1}^3(u_{x^l_n}-\underline{u}_{x^l_n})^2,\ \ T=\frac{\pt}{\pt{t_{\alpha}}}-\frac{\sigma_{t_\alpha}}{\sigma_{x_n^0}}\frac{\pt}{\pt{x_n^0}}.
 \end{equation}
 By straightforward calculations, we have
 \begin{align*}
 |L(T(u-\underline{u}))|&\leq C+Cnf^{-1}\det(I, \pt^2u[n-1])\\
 &+nf^{-1}\sum_{l=1}^3\det\left(\left((u-\underline{u})_{x_n^l\overline{i}}(u-\underline{u})_{x_n^lj}\right),\pt^2u[n-1]\right)
 \end{align*}
and
 \begin{align*}
 L((u_{x^l_n}-\underline{u}_{x^l_n})^2)
 &\geq-C-Cnf^{-1}\det(I,\pt^2u[n-1])\\
 &+2nf^{-1}\det\left(\left((u-\underline{u})_{x_n^l\overline{i}}(u-\underline{u})_{x_n^lj}\right),\pt^2u[n-1]\right).
 \end{align*}
So we obtain
\begin{align*}
 Lh&\geq-C-Cnf^{-1}\det(I, \pt^2u[n-1])\\
 &+nf^{-1}\sum_{l=1}^3\det\left(\left((u-\underline{u})_{x_n^l\overline{i}}(u-\underline{u})_{x_n^lj}\right),\pt^2u[n-1]\right).
\end{align*}
But the third summand is non-negative. Hence we get
$$Lh\geq-C-Cnf^{-1}\det(I, \pt^2u[n-1]),$$
where the constant $C$ only depends on
$\|u\|_{C^1},\|\underline{u}\|_{C^1},\|\underline{u}\|_{C^2},\|\underline{u}\|_{C^3},\|f\|_{C^0},\|f\|_{C^1}$
and $\Omega$. \\

 Secondly, set $\tilde{w}=h-B|q|^2$ with $B$ to be determined. It is
 clear that
 \begin{equation}
 L{\tilde{w}}=Lh-BL(|q|^2)\geq-C-(B+C)nf^{-1}\det(I, \pt^2u[n-1])-BC,
 \end{equation}
where the value of the constant $C$ might be different from the
previous one.

 Assuming $\delta<1$, we denote $\Omega\cap\mathbf{B}_{\delta}(P)$ by $\Omega_{\delta}$. On $\pt\Omega\cap\mathbf{B}_{\delta}(P)$, we have $T(u-\underline{u})=T(\tau\sigma)\equiv0$.
 This implies
 \begin{equation}\label{eq:33}
 |(u-\underline{u})_{x_k^l}|=|-(u-\underline{u})_{x_n^0}(\sigma_{x_n^0})^{-1}\sigma_{x_k^l}|\leq{C|q|},
 \end{equation}
and then we get $h\leq{C|q|^2}$. On
$\Omega\cap\mathbf{B}_{\delta}(P)$, it is clear $|h|\leq{C}$.

To sum up, we obtain
 \begin{equation}
\begin{cases}
  |h|\leq{C|q|^2},\ \ \ \text{on}\ \ \pt\Omega\cap{\bf{B}}_\delta(P);  \\
  |h|\leq{C},\ \ \ \text{on}\ \ \Omega\cap\overline{{\bf{B}}_\delta(P)}.  \ \ \
\end{cases}
%---手动编号
\end{equation}\ \

 Last, suppose $d=d(q)$ to be the distance from point $q$ to the boundary $\pt\Omega$. Let us take $w=\tilde{w}+A(\underline{u}-u-td+\frac{N}{2}d^2)$ as our auxiliary function. Next, we will choose $t,N,\delta$ to reach $Lw>0$.
 By direct calculation and the properties of the mixed discriminant, we can deal with the items in the bracket above as follows:
\begin{align*}
 L(\underline{u}-u)
 &\geq\varepsilon_0nf^{-1}\det(I,\pt^2u[n-1])-n-f^{-1}f_u(\underline{u}-u)\\
 &\geq\varepsilon_0\sum{u^{i\overline{i}}}-C,
\end{align*}
where $\varepsilon_0$ is such a constant that $\underline{u}$ satisfies $(\underline{u}_{i\overline{j}})\geq\varepsilon_0I$.
\begin{align*}
 tL(d)&=tnf^{-1}\det(\pt^2d,\pt^2u[n-1])-tf^{-1}f_ud\\
 &\leq tC\sum{u^{i\overline{i}}}+tCd\leq Ct(d+\sum{u^{i\overline{i}}}).
\end{align*}
\begin{align*}
 \frac{N}{2}L(d^2)&=\frac{N}{2}nf^{-1}\det(2d(d_{i\overline{j}})+(d_id_{\overline{j}}+d_{\overline{j}}d_i),\pt^2u[n-1])-\frac{N}{2}f^{-1}f_ud^2\\
 &=Nnf^{-1}\det(d(d_{i\overline{j}}),\pt^2u[n-1])+N\sum u^{i\overline{i}}d_id_{\overline{i}}-\frac{N}{2}f^{-1}f_ud^2\\
 &\geq-NdC\sum{u^{i\overline{i}}}+\frac{N}{\lambda_n}|\nabla d|^2-\frac{N}{2}f^{-1}f_ud^2\\
 &\geq-NdC\sum{u^{i\overline{i}}}+\frac{N}{2\lambda_n}-\frac{N}{2}f^{-1}f_ud^2,
\end{align*}
where $\lambda_k$ denote the eigenvalues of the matrix $(u_{i\overline{j}})$ and $0<\lambda_1\leq\cdots\leq\lambda_n$. Here we firstly used the fact that the value of the mixed discriminant is pointwisely invariant under orthonormal transition, which comes from Theorem 2.3 and 2.8. Then by Propositon 2.6, we can make the matrix $(\pt^2u)$ diagonal and get the second equality. In $\Omega\cap\mathbf{B}_{\delta}(P)$, we have
$(d_{ij})\geq-CI$. By Propositon 2.13 and Theorem 2.15 (1), we have $$\det(d(d_{i\overline{j}}),\pt^2u[n-1])\geq-C\sum{u^{i\overline{i}}}.$$
Since $|\nabla d|=1$ on $\pt\Omega$, we can choose $\delta$ small such that $|\nabla d|^2\geq\frac{1}{2}$ in $\Omega\cap\mathbf{B}_{\delta}(P)$.
Actually, $|\nabla d|=1$ in a small neighborhood of the boundary. See the book of D. Gilberge and N. S. Trudinger \cite{gilbarg2001elliptic}(Elliptic Partial Differential Equations of Second Order,Page 355). In this paper, we do not need so strong result. The fact $|\nabla d|^2\geq\frac{1}{2}$ is enough. So, $$ N\sum u^{i\overline{i}}d_id_{\overline{i}}\geq\frac{N}{\lambda_n}\sum d_id_{\overline{i}}=\frac{N}{\lambda_n}|\nabla d|^2\geq\frac{N}{2\lambda_n}.$$
Denoting $v=\underline{u}-u-td+\frac{N}{2}d^2$, those inequalities above imply
$$Lv\geq-C_0-C_1(t+Nd)d+[\varepsilon_0-C_1(t+Nd)]\sum{u^{i\overline{i}}}+\frac{N}{\lambda_n}.$$
Note that
$$\frac{\varepsilon_0}{4}\sum{u^{i\overline{i}}}+\frac{N}{\lambda_n}
\geq\frac{\varepsilon_0}{4}\sum_{i=1}^{n-1}{u^{i\overline{i}}}+\frac{N}{\lambda_n}
\geq n(\frac{\varepsilon_0}{4})^{\frac{n-1}{n}}(N\lambda^{-1}_1\cdots\lambda^{-1}_n)^{\frac{1}{n}}
\geq n\frac{\varepsilon_0}{4}f^{-\frac{1}{n}}N^{\frac{1}{n}}=C_2N^{\frac{1}{n}},$$
where we have used $\varepsilon_0<1$ and the inequality of arithmetic and geometric means
$$\sum a_i\geq n(\prod a_i)^{\frac{1}{n}}.$$
Till here, we do not have any restriction on $N$, and the positive constant $\varepsilon_0<1$ is fixed which is determined by $(\underline{u}_{i\overline{j}})\geq\varepsilon_0I$.
Now, we firstly choose $N$ large such that $$C_2N^{\frac{1}{n}}\geq C_0+\frac{3\varepsilon_0}{4}.$$
For such $N$, we choose
$t,\delta(<1)$ so small that $$C_1t<\frac{\varepsilon_0}{4}, C_1Nd<\frac{\varepsilon_0}{4}.$$Then
\begin{align*}
Lv&\geq-C-\frac{\varepsilon_0}{2}d+\frac{\varepsilon_0}{4}\sum{u^{i\overline{i}}}+CN^{\frac{1}{n}}\\
&\geq\frac{\varepsilon_0}{4}(\sum{u^{i\overline{i}}}+1).
\end{align*}

 On one hand, since
 \begin{equation*}
\begin{cases}
  v=0,\ \ \ \text{on}\ \ \pt\Omega\cap{\bf{B}}_\delta(P);  \\
  v\leq-td+\frac{N}{2}d^2<0,\ \ \ \text{on}\ \ \Omega\cap\pt{{\bf{B}}_\delta(P)},  \ \ \
\end{cases}
%---手动编号
\end{equation*}
we can choose $B$ so large that
 \begin{equation}
\begin{cases}
  w\leq C|q|^2-B|q|^2<0,\ \ \ \text{on}\ \ \pt\Omega\cap{\bf{B}}_\delta(P);  \\
  w\leq C-B|q|^2<C-B\delta^2<0,\ \ \ \text{on}\ \ \Omega\cap\pt{{\bf{B}}_\delta(P)}.  \ \ \
\end{cases}
%---手动编号
\end{equation}

On the other hand, we can also choose such a constant $A\gg B$ that
\begin{equation}
Lw\geq-C-BC+\frac{A\varepsilon_0}{4}+(\frac{A\varepsilon_0}{4}-B-C)\sum{u^{i\overline{i}}}>0,\
\ \text{in}\ \ \Omega_\delta.
\end{equation} By maximum principle,
we get$$w\leq0,\ \ \text{on}\ \ \overline{\Omega_\delta},\ \
w(0)=0.$$ So
$$|(T(u-\underline{u}))_{x_n^0}|(0)\leq|Av_{x_n^0}|(0)\leq C.$$

\section{the second order normal derivatives}
 In this section, our goal is
\begin{equation}\label{eq:41}
|u_{x^0_nx^0_n}|(0)<C\sim\|u\|_{C^1},\|\underline{u}\|_{C^1},\|\underline{u}\|_{C^2},\|\underline{u}\|_{C^3},\|f\|_{C^0},\|f\|_{C^1} ,\Omega
 \end{equation}

 To reach this goal,in $\mathbb{C}^n$ case, B. Guan\cite{guan2010complex} used
 \begin{align*}
\widetilde{h}&=(u_{y_n}-\varphi_{y_n})^2-\widetilde{\Phi}\\
&=(u_{y_n}-\varphi_{y_n})^2-\varphi_{j\overline{k}}\zeta_j\zeta_{\overline{k}}+(u-\varphi)_{x_n}\sigma_{j\overline{k}}\zeta_j\zeta_{\overline{k}}
+u_{1\overline{1}}(0)\\
\widetilde{w}&=-Av+B|q|^2-\widetilde{h},
\end{align*}
where $v$ is same as the one in section 3. By choosing a appropriate vector $\zeta=(\zeta_1,\cdots,\zeta_n)$ and the maximum priciple, he get $\eqref{eq:41}$. However, in $\mathbb{H}^n$, we need to modify $\widetilde{h}$ as
$$
h'=\sum_{k=1}^n\sum_{l=1}^3(u_{x_k^l}-\underline{u}_{x_k^l})^2+
\frac12\sum[(\underline{u}-u)_{k}\sigma_{\overline{k}}+\sigma_{k}(\underline{u}-u)_{\overline{k}}]
\xi_i\sigma_{i\overline{j}}\xi_{\overline{j}}-\sum\xi_i\underline{u}_{i\overline{j}}\xi_{\overline{j}}
+u_{1\overline1}(0),$$

\begin{rmk} Here we should use $u_{1\overline1}(0)$ as Bo Guan do in \cite{guan2010complex}, which is defined later. In $\mathbb{C}^n$ case, our auxiliary function $h'$ is reduced to
\begin{equation}
h'=\sum_{k=1}^n(u_{y_k}-\underline{u}_{y_k})^2+
\sum(\underline{u}-u)_{k}\sigma_{\overline{k}}\xi_i\sigma_{i\overline{j}}\xi_{\overline{j}}
-\sum\xi_i\underline{u}_{i\overline{j}}\xi_{\overline{j}}+u_{1\overline1}(0),
\end{equation}
where $z_k=x_k+iy_k$.
\end{rmk}

Now, let's start to prove $\eqref{eq:41}$. First, we only need to prove $$0\leq u_{n\overline{n}}(0)=u_{x_n^0x_n^0}(0)+\sum_{1\leq l\leq3}u_{x_n^lx_n^l}(0)\leq C.$$
To prove this, we need the lemma below:
                %引理,定理,定义,证明等开始前与上一段文字的距离 8pt%
\begin{lem}\label{lem41} If $\sum{u_{q_a\overline{q}_b}\xi_a\overline{\xi}_b}\geq C|\xi|^2$ for any $\xi\in\mathbb{H}^{n-1}$ and $1\leq{a,b}\leq{n-1}$. Then
$$0\leq{u_{n\overline{n}}(0)}\leq C.$$
\end{lem}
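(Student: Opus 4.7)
The plan is to read off the bound on $u_{n\overline{n}}(0)$ from a Moore-determinant Schur complement identity applied to the hyperhermitian matrix
$$
M := (u_{i\overline{j}})(0) = \begin{pmatrix} A & b \\ b^* & c \end{pmatrix},
$$
where $A = (u_{a\overline{b}}(0))_{1\leq a,b\leq n-1}$ is the tangential $(n-1)\times(n-1)$ principal block, $b$ is the column $(u_{a\overline{n}}(0))_{a<n}$, and $c = u_{n\overline{n}}(0)$. The lower bound $c\geq 0$ is immediate: by the proposition characterizing $C^2$ quaternionic psh functions, $M$ is non-negative definite, so evaluating the hyperhermitian form $\xi^{*}M\xi$ at $\xi = e_n$ and using the fact that diagonal entries of a hyperhermitian matrix are real yields $c\geq 0$.

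For the upper bound, the hypothesis $\sum u_{q_a\overline{q}_b}\xi_a\overline{\xi}_b \geq C|\xi|^2$ says that $A$ is uniformly positive definite, so $A$ is invertible and $A^{-1}$ is again hyperhermitian with controlled norm. Setting
$$
K := \begin{pmatrix} I_{n-1} & -A^{-1}b \\ 0 & 1 \end{pmatrix},
$$
a direct block computation---taking care with the non-commutative order of multiplications and using $(A^{-1})^{*}=A^{-1}$---shows
$$
K^{*}MK = \begin{pmatrix} A & 0 \\ 0 & c - b^{*}A^{-1}b \end{pmatrix}.
$$
Applying the same diagonalization trick to $K^{*}K$ itself produces the identity matrix after a further conjugation, so $\det(K^{*}K)=1$. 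Taking Moore determinants and invoking Theorem \ref{thm28}(2) then yields the Schur identity
$$
\det M = \det A \cdot \bigl(c - b^{*}A^{-1}b\bigr).
$$

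Once this identity is in hand, the remaining argument is bookkeeping of bounds already in the paper. The PDE supplies $\det M = f(0,u(0))$, which lies between positive constants because $\underline{u}\leq u\leq h$ on $\overline{\Omega}$ and $f>0$ is continuous. The hypothesis gives $\det A \geq C_0^{n-1}$, while the tangential estimates of Part I bound the entries of $A$ from above, so $\|A^{-1}\|$ is controlled. The entries of $b$ are real-linear combinations of the mixed derivatives $u_{t_\alpha x_n^0}(0)$ together with tangential second derivatives, both controlled in Parts I and II. Rearranging the Schur identity as
$$
c = \frac{f(0,u(0))}{\det A} + b^{*}A^{-1}b
$$
then gives $u_{n\overline{n}}(0)\leq C$.

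The main obstacle is the Schur identity itself, since the non-commutativity of $\mathbb{H}$ precludes any cofactor-expansion shortcut; everything must be reduced to the multiplicativity statement $\det(C^{*}AC)=\det A\cdot\det(C^{*}C)$ of Theorem \ref{thm28}(2). Once the block diagonalization of $M$ and the computation $\det(K^{*}K)=1$ are performed carefully, the rest of the proof is a one-line application of the estimates from Section 3.
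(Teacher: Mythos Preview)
Your Schur-complement route is different from the paper's, which is a one-line expansion of the Moore determinant along the last index: directly from the combinatorial formula one has
\[
\det(u_{i\overline{j}})(0)=\det(u_{a\overline{b}})\,u_{n\overline{n}}(0)+R,
\]
where $R$ collects all permutations in which $n$ sits in a cycle of length $\geq 2$; each such term contains at least one factor $u_{a\overline{n}}$ or $u_{n\overline{a}}$ with $a<n$ and \emph{no} factor $u_{n\overline n}$, so $R$ is a polynomial in entries already controlled by Parts~I and~II, and one solves for $u_{n\overline{n}}(0)$.  Your block-diagonalization is conceptually cleaner and yields the sharper Schur identity, but it is heavier machinery for what the paper treats as a definition-chasing remark.

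There is, however, a genuine gap in your justification of $\det(K^{*}K)=1$.  The matrix $J$ produced by ``the same diagonalization trick'' applied to $K^{*}K$ is precisely $K^{-1}$, so the identity $J^{*}(K^{*}K)J=I$ is tautological, and Theorem~\ref{thm28}(2) only yields $\det(K^{*}K)\cdot\det((K^{-1})^{*}K^{-1})=1$, which does not by itself force either factor to equal $1$.  The claim is nonetheless true: write $K=\prod_{a<n}E_{a}$ with $E_{a}=I-(A^{-1}b)_{a}\,e_{an}$ (these commute since $e_{an}e_{bn}=0$), verify by a $2\times2$ Moore-determinant computation that $\det(E_{a}^{*}E_{a})=1$, and iterate Theorem~\ref{thm28}(2).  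Alternatively you do not even need the exact value: since $A^{-1}$ and $b$ are bounded, so are the entries of $K^{*}K$, hence $\det(K^{*}K)$ is bounded above and below by positive constants, and that already suffices to bound $u_{n\overline{n}}(0)$ from the relation $\det M\cdot\det(K^{*}K)=\det A\cdot(c-b^{*}A^{-1}b)$.
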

\begin{proof}
 By definition, it is clear $$\det(u_{i\overline{j}})(0)=\det(u_{a\overline{b}})u_{n\overline{n}}(0)+R=f,$$ where $R$ denotes the remainder terms and $u_{a\overline{b}}$ denotes $\frac{\pt^2u}{\pt{q_a}\pt{\overline{q}_b}}$. Then
 $$u_{n\overline{n}}(0)=\frac{f-R}{\det(u_{a\overline{b}})}\leq C.$$
 \end{proof}
 By this lemma, it suffices to prove
$$m_0\triangleq\min_{P\in\pt\Omega}\min_{\stackrel{\xi\in T_P^{\mathbb{H}}\pt\Omega}{|\xi|=1}}\xi_iu_{i\overline{j}}\overline{\xi}_j\geq C.$$
We can choose coordinates such that $m_0$ attains at the origin $P\in\pt\Omega$ when
$\xi=(1,0,\cdots,0)$. Then we only need to prove
$$m_0=u_{1\overline1}(0)\geq C>0.$$
We will use the auxiliary function $h'$ to get this goal in the same coordinates. By $\eqref{eq:47}$, $u_{1\overline1}(0)$ is under control.

Since $u-\underline{u}=\tau\sigma$, where $\tau$ is a smooth function
and $\sigma$ is the defining function of $\Omega$ with
$|\nabla\sigma|=1$, we get some basic formulas as follows:
 \begin{equation}\label{eq:43}
 (u-\underline{u})_{i}=\tau_{i}\sigma+\tau\sigma_{i},\ \ \
 \sum(u-\underline{u})_{i}\sigma_{\overline{i}}=\tau_i\sigma_{\overline{i}}\sigma+\tau\sigma_i\sigma_{\overline{i}};
 \end{equation}
 \begin{equation}\label{eq:44}
 (u-\underline{u})_{i\overline{j}}=\tau_{i\overline{j}}\sigma+\sigma_{\overline{j}}\tau_i+\tau_{\overline{j}}\sigma_i+\tau\sigma_{i\overline{j}}.
 \end{equation}

From $\eqref{eq:43}$ and $\eqref{eq:44}$, we have
 \begin{equation}\label{eq:45}
\tau=\sum\limits_{k}\frac{(u-\underline{u})_{k}\sigma_{\overline{k}}+\sigma_{k}(u-\underline{u})_{\overline{k}}}{2|\nabla\sigma|^2} \ \ \text{on} \ \ \pt\Omega,
\end{equation} and
 \begin{equation}\label{eq:46}
\xi_i(u-\underline{u})_{i\overline{j}}\xi_{\overline{j}}=\tau\xi_i\sigma_{i\overline{j}}\xi_{\overline{j}} \ \ \text{for any} \ \ q\in\pt\Omega,\xi\in T_q^{\mathbb{H}}\pt\Omega.
\end{equation}
So
\begin{equation}\label{eq:47}
u_{1\overline1}(0)=\underline{u}_{1\overline1}(0)-(u-\underline{u})_{x_n^0}(0)\sigma_{1\overline1}(0).
\end{equation}

We can assume
$u_{1\overline1}(0)<\frac12\underline{u}_{1\overline1}(0)$,
otherwise, $m_0\geq C>0$ can be obtained immediately. Using this
condition, we have
$(u-\underline{u})_{x_n^0}\sigma_{1\overline1}(0)=\underline{u}_{1\overline1}(0)-u_{1\overline1}(0)\geq\frac12\underline{u}_{1\overline1}(0).$
Then
 \begin{equation}
\sigma_{1\overline1}(0)\geq C>0.
\end{equation}
 Now, let us consider $w'=h'-B|q|^2+Av$, where
\begin{equation}
h'=\sum_{k=1}^n\sum_{l=1}^3(u_{x_k^l}-\underline{u}_{x_k^l})^2+\Phi,
\end{equation}
\begin{equation}\label{eq:410}
\Phi=
\frac12\sum[(\underline{u}-u)_{k}\sigma_{\overline{k}}+\sigma_{k}(\underline{u}-u)_{\overline{k}}]
\xi_i\sigma_{i\overline{j}}\xi_{\overline{j}}-\sum\xi_i\underline{u}_{i\overline{j}}\xi_{\overline{j}}+u_{1\overline1}(0).
\end{equation}
In $\eqref{eq:410}$, we mean the indices $i,j,k$ in the sum run from $1$ to $n$ as elsewhere, and the vector $\xi$ depends on the point $q\in\overline{\Omega_\delta}$ to be determined below.

Thanks to  Part II, we only need to prove
\\[8pt]
 (J1)\ \ $Lh'\geq-C(1+\sum_iu^{i\overline{i}}),\ \ \text{in} \ \ \Omega_\delta$ ;
 \\[8pt]
 (J2)\ \ $h'\leq{C|q|^2},\ \ \ \text{on}\ \ \pt\Omega\cap{\bf{B}}_\delta(P)$;
 \\[8pt]
 (J3)\ \ $|h'|\leq{C},\ \ \ \text{on}\ \ \Omega\cap\overline{{\bf{B}}_\delta(P)}$.\\

In fact, as in Part II, by these inequalities and maximum principle
we can get $\Phi_{x_n^0}(0)\leq -Av_{x_n^0}(0)$. Here we notice that $w'(0)=h'(0)=\Phi(0)=0$. Indeed, by $\eqref{eq:45}$ and $\eqref{eq:46}$, we have
\begin{align*}
\Phi(0)&=\sum\xi_i(\underline{u}-u)_{i\overline{j}}\xi_{\overline{j}}-\sum\xi_i\underline{u}_{i\overline{j}}\xi_{\overline{j}}+u_{1\overline1}(0)\\
&=-\sum\xi_iu_{i\overline{j}}\xi_{\overline{j}}(0)+u_{1\overline1}(0)=0,
\end{align*}
the last equality holds provided $\xi_i(0) =0$ for $i > 1$. Actually, this fact is included in $\eqref{eq:411}$.

 Equivalently, there
exists a constant $C>0$ satisfies
$$u_{x_n^0x_n^0}(0)\sigma_{1\overline1}(0)\leq A(u-\underline{u})_{x_n^0}(0)+Atd_{x_n^0}(0)+C\leq C,\ \ u_{x_n^0x_n^0}(0)\leq C,$$
here $C$ are not same and we have chosen
 \begin{equation*}
 \xi_i=
\begin{cases}
  -\frac{\sigma_n}{\omega},\ \ \ \text{if}\ \ \ i=1;  \\
  0,\ \ \ \text{if}\ \ \ 2\leq i\leq{n-1};\\
  \frac{\sigma_1}{\omega},\ \ \ \text{if}\ \ \ i=n,  \ \ \
\end{cases}%---手动编号
\end{equation*}
where $\omega=|\xi|_{\mathbb{H}^n}$.

Since (J3) is obviously satisfied, we will check (J1) and (J2). By
$\eqref{eq:46}$, we have
\begin{equation}\label{eq:411}
\Phi=-\sum_{i,j}\xi_iu_{i\overline{j}}\xi_{\overline{j}}+u_{1\overline1}(0)\leq0,\
\ \ \text{on}\ \ \ \pt\Omega\cap{\bf{B}}_\delta(P).
\end{equation}
Therefore, (J2) follows from $\eqref{eq:33}$.

 In the rest of this section, we derive (J1) to finish Step 3. For simplicity, we set
$\mu=\sum\xi_i\sigma_{i\overline{j}}\xi_{\overline{j}},
\mu_k^0=\sigma_{x_k^0}\mu, \mu_k^l=\sigma_{x_k^l}\mu$, we get
 \begin{align*}
 L\Phi&=\frac12\sum L((\underline{u}-u)_{k}\sigma_{\overline{k}}\mu+\sigma_{k}(\underline{u}-u)_{\overline{k}}\mu)-L(\sum\xi_i\underline{u}_{i\overline{j}}
\xi_{\overline{j}})\\
 &=\frac12\sum L((\underline{u}_{k}\sigma_{\overline{k}}+\sigma_{k}\underline{u}_{\overline{k}})\mu)-\frac12\sum L((u_{k}\sigma_{\overline{k}}+\sigma_{k}u_{\overline{k}})
\mu)-L(\sum\xi_i\underline{u}_{i\overline{j}}\xi_{\overline{j}})\\
&=E+F+G,
\end{align*}
where
\begin{align*}
 E&=\frac12\sum L((\underline{u}_{k}\sigma_{\overline{k}}+\sigma_{k}\underline{u}_{\overline{k}})\mu),\\
 F&=-\frac12\sum L((u_{k}\sigma_{\overline{k}}+\sigma_{k}u_{\overline{k}})\mu),\\ G&=L(\sum\xi_i\underline{u}_{i\overline{j}}\xi_{\overline{j}}).
\end{align*}
It is clear
\begin{equation}
 E+G\geq-C(1+\sum{u^{i\overline{i}}}).
 \end{equation}
  As for $F$, we can write it as
\begin{align*}
 F&=-\sum_{k=1}^n L(u_{x_k^0}\mu_k^0+\sum_{l=1}^3\mu_k^lu_{x_k^l})\\
&\geq-C-nf^{-1}\sum_{k=1}^n\det\left((u_{x_k^0i\overline{j}}\mu_k^0+{\mu_k^0}_{i\overline{j}}u_{x_k^0}),\pt^2u[n-1]\right)\\
&-nf^{-1}\sum_{k=1}^n\det\left((u_{x_k^0\overline{j}}{\mu_k^0}_i+{\mu_k^0}_{\overline{j}}u_{x_k^0i}),\pt^2u[n-1]\right)\\
&-nf^{-1}\sum_{k=1}^n\sum_{l=1}^3\det\left((u_{x_k^li\overline{j}}\mu_k^l+{\mu_k^l}_{i\overline{j}}u_{x_k^l}),\pt^2u[n-1]\right)\\
&-nf^{-1}\sum_{k=1}^n\sum_{l=1}^3\det\left((u_{x_k^l\overline{j}}{\mu_k^l}_i+{\mu_k^l}_{\overline{j}}u_{x_k^li}),\pt^2u[n-1]\right).
\end{align*}
We denote the last four terms in the right of the inequality above
by $F_1,F_2,F_3,F_4$ one by one. Thus it is easy to check
$F_1+F_3\geq-C(1+\sum{u^{i\overline{i}}})$.

By Claim \ref{clm214}, we get
\begin{align*}
 F_2&\geq -nf^{-1}\sum_{k=1}^n\det\left(({\mu_k^0}_{\overline{j}}u_{i\overline{k}})+({\mu_k^0}_{\overline{j}}u_{i\overline{k}})^*,\pt^2u[n-1]\right)\\
 &+nf^{-1}\sum_{k=1}^n\sum_{l=1}^3\det\left(({\mu_k^0}_{\overline{j}}e_lu_{x_k^li})+({\mu_k^0}_{\overline{j}}e_lu_{x_k^li})^*,\pt^2u[n-1]\right)\\
 &=-2\sum_{k=1}^n\mathrm{Re}({\mu_k^0}_{\overline{k}})+nf^{-1}\sum_{k=1}^n\sum_{l=1}^3\det\left(({\mu_k^0}_{\overline{j}}e_lu_{x_k^li})+({\mu_k^0}_{\overline{j}}e_lu_{x_k^li})^*,\pt^2u[n-1]\right),
\end{align*}

For $\forall\varepsilon_0>0$, from Corollary \ref{cor217}, we obtain
\begin{equation}
\begin{split}
F_2+F_4
&\geq -C-nf^{-1}\sum_{k=1}^n\sum_{l=1}^3\det\left(({\mu_k^l}_{\overline{j}}u_{x_k^li})+({\mu_k^l}_{\overline{j}}u_{x_k^li})^*,\pt^2u[n-1]\right)\\
&+nf^{-1}\sum_{k=1}^n\sum_{l=1}^3\det\left(({\mu_k^0}_{\overline{j}}e_lu_{x_k^li})+({\mu_k^0}_{\overline{j}}e_lu_{x_k^li})^*,\pt^2u[n-1]\right)\\
&\geq-C-nf^{-1}(1-\varepsilon_0)^{-1}\sum_{k=1}^n\sum_{l=1}^3\det(({\mu_k^l}_{i}{\mu_k^l}_{\overline{j}}),\pt^2u[n-1])\\
&-nf^{-1}(1-\varepsilon_0)\sum_{k=1}^n\sum_{l=1}^3\det((u_{x_k^li}u_{x_k^l\overline{j}}),\pt^2u[n-1]))\\
&-nf^{-1}(1-\varepsilon_0)^{-1}\sum_{k=1}^n\sum_{l=1}^3\det(({\mu_k^0}_{i}{\mu_k^0}_{\overline{j}}),\pt^2u[n-1])\\
&-nf^{-1}(1-\varepsilon_0)\sum_{k=1}^n\sum_{l=1}^3\det((u_{x_k^li}u_{x_k^l\overline{j}}),\pt^2u[n-1])\\
&\geq
-C(1+\sum_iu_{i\overline{i}})-(2-2\varepsilon_0)nf^{-1}\sum_{k=1}^n\sum_{l=1}^3\det((u_{x_k^li}u_{x_k^l\overline{j}}),\pt^2u[n-1]),
\end{split}
\end{equation}
\begin{equation}
\begin{split}
L((u_{x_k^l}-\underline{u}_{x_k^l})^2)&\geq -C+2(u_{x_k^l}-\underline{u}_{x_k^l})nf^{-1}\det((u_{x_k^l}-\underline{u}_{x_k^l})_{i\overline{j}},\pt^2u[n-1])\\
&+2nf^{-1}\det((u_{x_k^l}-\underline{u}_{x_k^l})_i(u_{x_k^l}-\underline{u}_{x_k^l})_{\overline{j}},\pt^2u[n-1])\\ &\geq-C(1+\sum_iu_{i\overline{i}})+2nf^{-1}\det((u_{x_k^li}u_{x_k^l\overline{j}}),\pt^2u[n-1])\\
&+2nf^{-1}\det((u_{x_k^li}{\underline{u}}_{x_k^l\overline{j}}+{\underline{u}}_{x_k^li}u_{x_k^l\overline{j}}),\pt^2u[n-1])\\
&\geq-C(1+\sum_iu_{i\overline{i}})+2nf^{-1}\det((u_{x_k^li}u_{x_k^l\overline{j}}),\pt^2u[n-1])\\
&+2nf^{-1}{\varepsilon_0}^{-1}\det(({\underline{u}}_{x_k^li}{\underline{u}}_{x_k^l\overline{j}}),\pt^2u[n-1])\\
&+2nf^{-1}\varepsilon_0\det((u_{x_k^li}u_{x_k^l\overline{j}}),\pt^2u[n-1])\\
&\geq-C(1+\sum_iu_{i\overline{i}})+(2-2\varepsilon_0)nf^{-1}\det((u_{x_k^li}u_{x_k^l\overline{j}}),\pt^2u[n-1]).
\end{split}
\end{equation}
Hence, by the arguements in \cite{caffarelli1984dirichlet}\cite{caffarelli1985dirichlet} and standard elliptic theory, we complete the proof of Theorem \ref{thm11}.

\section{Construction of a subsolution}
 In this section, we will construct a subsolution to \eqref{eq15} in a strictly pseudoconvex domain by the method in \cite{caffarelli1985dirichlet}. To prove Proposition \ref{prop12}, we first
show that under the assumption of Proposition \ref{prop12}, if $u\leq m$,
then there exists a constant C=C(m) satisfies
 $$f(q,u,p)\leq C(1+|p|^n) \ \ \text{for}\ \ \ q\in\overline\Omega.$$
 Indeed, if $\omega\leq m$, then
 \begin{align*}
 f(q,\omega,\eta)&=f(q,0,0)+\int_0^1\frac{d}{dt}f(q,t\omega,t\eta)dt\\
 &\leq C+m\int_0^1f_u(q,t\omega,t\eta)dt+\sum_{i=1}^{4n}|p_i|\int_0^1|f_{p_i}(q,t\omega,t\eta)|dt\\
 &\leq C(1+\max_{0\leq t\leq1}f^{1-\frac1n}(q,t\omega,t\eta)(C+|p|)).
 \end{align*}
Setting $\Lambda=\max\limits_{\substack{{|\eta|\leq|p|}\\ {\omega\leq m}}}f(q,\omega,\eta)$, we get $$f(q,u,p)\leq\Lambda\leq C(1+|p|^n).$$
 Now, we define $$\underline{u}=\varphi+s(e^{kr}-1),\ \ \ k,s>0,$$ where $r$ is a strictly  psh defining function for $\Omega$. Extending $\varphi$ as a psh $C^\infty$ function in $\overline\Omega$, we get $$\det(\underline{u}_{i\overline{j}})\geq(ske^{kr})^n\det(r_{i\overline{j}}+kr_{\overline{j}}r_i).$$
Let $\alpha>0$ be such that $(r_{i\overline{j}})\geq\alpha I$. Then $$\det(r_{i\overline{j}}+kr_{\overline{j}}r_i)\geq\alpha^{n-1}(\alpha+k|\nabla r|^2).$$
To check this at a point $z^0\in\overline\Omega$ choose coordinate such that $r_i(z^0)=0$ for $i<n$, then $|\nabla r(z^0)|=|r_n(z^0)|$ and the above inequality follows. Now we have $$\det(\underline{u}_{i\overline{j}})\geq(sk\alpha e^{kr})^n(1+\frac{k}{\alpha}|\nabla r|^2),$$ and
$$|\nabla\underline{u}|\leq\max|\nabla\varphi|+ske^{kr}|\nabla r|,$$
$$|\nabla\underline{u}|^n\leq C_1+C_2(ske^{kr})^n|\nabla r|^n.$$
Choose $k$ so large that $$C(m)C_2|\nabla r|^n\leq k\alpha^{n-1}|\nabla r|^2,$$
then we can choose $s$ so that
\begin{align*}
f(q,\underline{u},\nabla\underline{u})&\leq C(m)(1+|\nabla\underline{u}|^n)
\leq C(m)(1+C_1+C_2(ske^{kr})^n|\nabla r|^n)\\
&\leq C(m)(1+C_1)+(sk\alpha e^{kr})^n\frac{k}{\alpha}|\nabla r|^2\\
&\leq (sk\alpha e^{kr})^n(1+\frac{k}{\alpha}|\nabla r|^2)\leq\det(\underline{u}_{i\overline{j}}),
\end{align*}
where $m=\max\varphi$. Hence we prove Proposition \ref{prop12}.

\section{Acknowledgements}
 I am very grateful to my supervisor Professor Xinan Ma for his patient instruction and expert guidance. Without his help, the completion of this paper would be impossible. Also, I would like to thank my mentor Professor Jie Qing for his encouragement. Moreover, I'd like to thank my friends Guohuan Qiu and Dekai Zhang for helpful discussions in the seminar on geometric PDEs.

\nocite{*}

% ----------------------------------------------------------------
\bibliographystyle{amsplain}
\bibliography{reference}

\providecommand{\bysame}{\leavevmode\hbox to3em{\hrulefill}\thinspace}
\providecommand{\MR}{\relax\ifhmode\unskip\space\fi MR }
% \MRhref is called by the amsart/book/proc definition of \MR.
\providecommand{\MRhref}[2]{%
  \href{http://www.ams.org/mathscinet-getitem?mr=#1}{#2}
}
\providecommand{\href}[2]{#2}
\begin{thebibliography}{10}

\bibitem{aleksandrov1938gemischte}
Alexander~D Aleksandrov, \emph{Die gemischte {D}iskriminanten und die gemischte
  {V}olumina}, Math. Sbornik \textbf{3} (1938), 227--251.

\bibitem{alesker2003non}
Semyon Alesker, \emph{Non-commutative linear algebra and plurisubharmonic
  functions of quaternionic variables}, Bulletin des sciences mathematiques
  \textbf{127} (2003), no.~1, 1--35.

\bibitem{alesker2003quaternionic}
\bysame, \emph{Quaternionic {M}onge-{A}mpere equations}, The Journal of
  Geometric Analysis \textbf{13} (2003), no.~2, 205--238.

\bibitem{alesker2013solvability}
\bysame, \emph{Solvability of the quaternionic {M}onge-{A}mp{\`e}re equation on
  compact manifolds with a flat hyper{K}{\"a}hler metric}, Advances in
  Mathematics \textbf{241} (2013), 192--219.

\bibitem{alesker2013uniform}
Semyon Alesker and Egor Shelukhin, \emph{On a uniform estimate for the
  quaternionic {C}alabi problem}, Israel Journal of Mathematics \textbf{197}
  (2013), no.~1, 309--327.

\bibitem{alesker2010quaternionic}
Semyon Alesker and Misha Verbitsky, \emph{Quaternionic {M}onge-{A}mpere
  equation and {C}alabi problem for {HKT}-manifolds}, Israel Journal of
  Mathematics \textbf{176} (2010), no.~1, 109--138.

\bibitem{aslaksen1996quaternionic}
Helmer Aslaksen, \emph{Quaternionic determinants}, The Mathematical
  Intelligencer \textbf{18} (1996), no.~3, 57--65.

\bibitem{caffarelli1985dirichlet}
Luis Caffarelli, Joseph Kohn, Louis Nirenberg, and Joel Spruck, \emph{The
  {D}irichlet problem for nonlinear second-order elliptic equations. {II}.
  complex {M}onge-{A}mp{\`e}re, and uniformaly elliptic, equations},
  Communications on pure and applied mathematics \textbf{38} (1985), no.~2,
  209--252.

\bibitem{caffarelli1984dirichlet}
Luis Caffarelli, Louis Nirenberg, and Joel Spruck, \emph{The {D}irichlet
  problem for nonlinear second-order elliptic equations {I}.
  {M}onge-{A}mp{\'e}gre equation}, Communications on pure and applied
  mathematics \textbf{37} (1984), no.~3, 369--402.

\bibitem{calabi1954space}
Eugenio Calabi, \emph{The space of {K}$\ddot{a}$hler metrics}, Proceedings of
  the International Congress of Mathematicians \textbf{2} (1954), 206--207.

\bibitem{gilbarg2001elliptic}
David Gilbarg and Neil~S Trudinger, \emph{Elliptic partial differential
  equations of second order}, vol. 224, springer, 2001.

\bibitem{guan1998dirichlet}
Bo~Guan, \emph{The {D}irichlet problem for complex {M}onge-{A}mpere equations
  and regularity of the pluri-complex {G}reen function}, Communications in
  Analysis and Geometry \textbf{6} (1998), no.~4, 687--703, a correction,
  \textbf{8} (2000), 213--218.

\bibitem{guan2010complex}
Bo~Guan and Qun Li, \emph{Complex {M}onge-{A}mp{\`e}re equations and totally
  real submanifolds}, Advances in Mathematics \textbf{225} (2010), no.~3,
  1185--1223.

\bibitem{guan2013dirichlet}
\bysame, \emph{The {D}irichlet problem for a complex {M}onge-{A}mp\`ere
  equation on {H}ermitian manifolds}, Advances in Mathematics \textbf{246}
  (2013), 351--367.

\bibitem{guan2013class}
Bo~Guan and Wei Sun, \emph{On a class of fully nonlinear elliptic equations on
  {H}ermitian manifolds}, pre-print (2013), arXiv:1301.5863[math.AP].

\bibitem{guan2002extremal}
Pengfei Guan, \emph{The extremal function associated to intrinsic norms},
  Annals of mathematics \textbf{156} (2002), no.~1, 197--211.

\bibitem{moore1922determinant}
Eliakim~Hastings Moore, \emph{On the determinant of an hermitian matrix of
  quaternionic elements}, Bull. Amer. Math. Soc \textbf{28} (1922), 161--162.

\bibitem{yau1978ricci}
Shing-Tung Yau, \emph{On the ricci curvature of a compact {K}{\"a}hler manifold
  and the complex {M}onge-{A}mp{\'e}re equation, {I}}, Communications on pure
  and applied mathematics \textbf{31} (1978), no.~3, 339--411.

\end{thebibliography}
\end{document}